\numberwithin{equation}{section}
\theoremstyle{plain}
\newtheorem{thm}{Theorem}[section]
\newtheorem{lem}[thm]{Lemma}
\newtheorem{prop}[thm]{Proposition}
\theoremstyle{definition}
\theoremstyle{remark}
\newcommand{\bs}{\boldsymbol}
\newcommand{\be}{\begin{equation}}    
\newcommand{\ee}{\end{equation}}    
\newcommand{\beu}{\begin{equation*}}    
\newcommand{\eeu}{\end{equation*}}    
\newcommand{\bea}{\begin{eqnarray}}    
\newcommand{\eea}{\end{eqnarray}}    
\newcommand{\beaa}{\begin{eqnarray*}}    
	\newcommand{\eeaa}{\end{eqnarray*}}    
\newcommand{\bmx}{\begin{bmatrix}}    
	\newcommand{\emx}{\end{bmatrix}}
\newcommand{\bpm}{\begin{pmatrix}}
	\newcommand{\epm}{\end{pmatrix}}
\newcommand{\Z}{\mathbb{Z}}
\newcommand{\C}{\mathbb{C}}
\newcommand{\gl}{\mathfrak{gl}}
\newcommand{\de}{\delta}
\newcommand{\wt}{\widetilde}
\newcommand{\ber}{{\rm Ber}\,}
\newcommand{\bers}{{\rm Ber}^{\bs s}\,}
\newcommand{\sgn}{ {\rm sgn}\,}
\newcommand{\End}{{\rm End}\,}
\newcommand{\si}{\sigma}
\newcommand{\la}{\lambda}
\newcommand{\La}{\Lambda}
\newcommand{\dd}{\partial}
\newcommand{\dv}{\partial_v}
\newcommand{\du}{\partial_u}
\newcommand{\BB}{\mathfrak{B}}
\newcommand{\cdet}{{\rm cdet}\,}
\newcommand{\sS}{\mathfrak{S}}
\newcommand{\diag}{{\rm diag}}
\author{Chenliang Huang} 
\address{Department of Mathematical Sciences, 402
	N. Blackford St, LD 270, IUPUI, Indianapolis, IN 46202, USA.  }
\email{ch30@iupui.edu}
\author{Evgeny Mukhin} 
\address{Department of Mathematical Sciences, 402
	N. Blackford St, LD 270, IUPUI, Indianapolis, IN 46202, USA.  }
\email{emukhin@iupui.edu}
\begin{document}
\title[Duality of supersymmetric Gaudin models] {The duality of $\gl_{m|n}$ and $\gl_k$ Gaudin models}
\begin{abstract}
We establish a duality of the non-periodic Gaudin model associated with superalgebra $\gl_{m|n}$ and the non-periodic Gaudin model associated with algebra $\gl_k$.

The Hamiltonians of the Gaudin models are given by expansions of a Berezinian of an $(m+n)\times(m+n)$ matrix in the case of $\gl_{m|n}$ 
and of a column determinant of a $k\times k$ matrix in the case of $\gl_k$. We obtain our results by proving Capelli type identities for both cases and comparing the results.
\end{abstract}
\maketitle
\section{Introduction}\label{sec:int}
Integrable models associated with finite-dimensional Lie superalgebras have been recently receiving the much deserved attention. While most of the work is done by physicists on the spin-chain side, the theory of the corresponding Gaudin models is also moving forward, see \cite{MR}, \cite{MVY}, \cite{HMVY}. 
The duality of various systems is another very important topic which always gets a lot of attention. In this paper we discuss the duality 
of the Gaudin model associated with supersymmetric $\gl_{m|n}$ to the Gaudin model associated with even $\gl_k$ acting on the same bosonic-fermionic space.

In the Lie algebra duality setting, the Lie superalgebras $\gl_{m|n}$ and $\gl_k$ both act on the algebra of supersymmetric polynomials $V$ generated by entries of the $(m+n)\times k$ matrix $(x_{i,a})$ where $x_{i,a}$ is even if and only if $i\leq m$. Then each row is identified with the vector representation of $\gl_k$ and each column with the vector representation of $\gl_{m|n}$. The two actions are extended to the action on the whole bosonic-fermionic space $V$ of supersymmetric polynomials  as differential operators, where they centralize each other, see Section \ref{classical duality}. We chose column evaluation parameters $z_1,\dots,z_k$ for $\gl_{m|n}$, row evaluation parameters $\La_1,\dots,\La_{m+n}$ for $\gl_{k}$ and upgrade the action to the current algebras $\gl_{m|n}[t]$ and  $\gl_{k}[t]$ in $V$ so that each row and each column becomes an evaluation module with the corresponding evaluation parameter.

It is well known that the commuting Hamiltonians of the $\gl_k$ Gaudin system are elements of $U\gl_{k}[t]$ given by the coefficients of the column determinant of the $k\times k$ matrix $G=\big(\delta_{a,b}(\dd_u-z_a)-e_{a,b}^{[k]}(u)\big)$, see \cite{T}, where we chose evaluation  parameters of columns $z_1,\dots,z_k$ to be the so called boundary parameters of the model. 

It is also known that the Hamiltonians of the $\gl_{m|n}$ Gaudin system are elements of $U\gl_{m|n}[t]$ given by the coefficients of the Berezinian of the $(m+n)\times(m+n)$ matrix $B=\big(\delta_{i,j}(\dd_v-\La_i)-e_{i,j}^{[m|n]}(v)\big)$, see \cite{MR}, \cite{MM}, and Section \ref{subsec:balgebra}. Note that we chose evaluation  parameters of rows $\La_1,\dots,\La_{m+n}$ to be the boundary parameters of the model.

The column determinant $\cdet G$ is a differential operator of order $k$ in variable $u$ whose coefficients are power series in $u^{-1}$.
The Berezinian $\ber B$ a pseudodifferential operator in $\dv^{-1}$ whose coefficients are power series in $v^{-1}$.
Our main result is that after multiplying by simple factors, coefficients of $v^r\dd_v^s$ and of $u^s\dd_u^r$ of the two expansion coincide as differential operators in $V$, see Theorem \ref{thm:duality between bethe algebras}. 

In order to prove our main result we establish two Capelli-like identities, see Propositions \ref{prop:capelli} and \ref{prop:ber for gl_m|n}, which give the normal ordered expansions of the $\cdet G $ and $\ber B$ acting in $V$. Because of the presence of fermions, those expansions have more terms than the original Capelli identity. However, the main feature is the same: the quantum corrections created by non-commutativity all cancel out and the result is the same as it would be in the supercommutative case.

The expansion of the $\cdet G$ is done by careful accounting of all terms and finding a way to cancel or collect the terms. For the Berezinian expansion we exploit a few tricks. Namely, we represent $\ber B$ as a Berezinian of a matrix of size $(m+n+k)\times (m+n+k)$ then interchange the rows and columns to reduce the computation to another column determinant. The key property which allows us to do it, is the super version of Manin property of the matrices with some additional property which we call "affine-like". The affine-like property guarantees the existence of various inverse matrices and the Manin property of those inverses, see Section \ref{sec:Ber}. In particular, we argue that for such matrices the Berezinian can be defined via quasi-determinants, similar to affine Manin matrices of standard parity treated in \cite{MR}.

\medskip

The spectrum of Gaudin Hamiltonians is found by the Bethe ansatz, see \cite{MTV1} for the even and  \cite{MVY} for the supersymmetric case. Since the two sets of Hamiltonians actually coincide in $V$, we have a correspondence between solution sets of two very different systems of the Bethe ansatz equations. Moreover, the eigenvectors of $\gl_k$ model are in a natural bijection with differential operators of order $k$ with quasipolynomial kernels, see \cite{MTV4}, while eigenvectors of $\gl_{m|n}$ model are conjecturally in a bijection with ratios of differential operators of orders $m$ and $n$, and appropriate superspaces of quasirational functions, cf. \cite{HMVY}.

The duality of the $\gl_n$ and $\gl_m$ systems was established in \cite{MTV5}. The corresponding map between spaces of polynomials is given by an appropriate Fourier transform  and it is also identified with the bispectrality property of the KP hierarchy, see \cite{MTV2}. It is important to understand this map in the supersymmetric case.

\medskip

We expect that the results of this paper can be extended to the most general duality 
of Gaudin models associated with $\gl_{m|n}$ and $\gl_{k|l}$. We also expect that a similar duality can be established 
in the Yangian,  see \cite{MTV3}, and the quantum setting.

The duality between $\gl_{1|1}$ and $\gl_2$ Gaudin models has appeared in \cite{BBK}.

\medskip

The paper is constructed as follows. In Section \ref{sec:preliminary} we establish our conventions and notation. We discuss Berezinians and their properties in Section \ref{sec:Ber}. Section \ref{sec:gl_{m|n} bethe algebra} is dedicated to definition of Bethe algebras. Section \ref{sec:duality} contains our main result, Theorem \ref{thm:duality between bethe algebras}, and its proof. 

\medskip

{\bf Acknowledgments.} 
We are grateful to K. Lu, V. Tarasov, F. Uvarov for interesting discussions and useful remarks. 
CH thanks B. Vicedo for motivation and fruitful conversations. EM thanks L. 
Banchi for a curious question and the follow up discussions which prompted this project.

This work was partially supported by a grant from the Simons Foundation \#353831.
 
\section{Preliminaries}\label{sec:preliminary}
\subsection{Superspaces and superalgebras}\label{subsec:superspace and superalgebra}
Let $\bs s = (s_1,\dots , s_{m+n} )$, $s_i\in\{\pm1\}$, be a sequence such that $1$ occurs exactly $m$ times. We call such a sequence a {\it parity sequence}. Denote by $S_{m|n}$ the set of all parity sequences. We call the parity sequence $\bs s_0 = (1,\dots, 1, -1,\dots,-1)$ {\it standard}. 

\medskip

We always work over $\C$.
A \emph{vector superspace} $V=V_{\bar{0}}\oplus V_{\bar{1}}$ is a $\Z_2$-graded vector space. The \emph{parity} of a homogeneous vector $v$ is denoted by $\bar{v}\in\Z/2\Z=\{\bar{0},\bar{1}\}$. An element $v$ in $V_{\bar{0}}$ (respectively, $V_{\bar{1}}$) is called \emph{even} (respectively, \emph{odd}), and we write $\bar{v}=\bar{0}$ (respectively, $\bar{v}=\bar{1}$). We set $(-1)^{\bar{0}}=1$ and $(-1)^{\bar{1}}=-1$. 

Let $\C^{m|n}$ be a complex vector superspace, with $\dim(\C^{m|n})_{\bar{0}}=m$ and $\dim(\C^{m|n})_{\bar{1}}=n$. Given a parity sequence $\bs s$, define 
\[
\bar{i}^{\bs s}=\begin{cases}
\bar{0},\qquad s_i=1,\\
\bar{1},\qquad s_i=-1,
\end{cases}
\] and choose a homogeneous basis $e^{\bs s}_i,\ i=1,\dots,m+n$, of $\C^{m|n}$ such that $\bar{e}^{\bs s}_i=\bar{i}^{\bs s}$. 

We often drop $\bs{s}_0$ from the notation depending on a parity sequence and write, for example, $\bar i=\bar{i}^{\bs {s}_0}$, $e_i=e_i^{\bs s_0}$, etc.  

A {\it superalgebra} is a vector superspace with an even, bilinear, associative, unital product operation. Given superalgebras $\mathcal{A},\mathcal{B}$,
the tensor product $\mathcal{A}\otimes\mathcal{B}$ is a superalgebra. For any homogeneous elements $x,x'\in\mathcal{A}$, $y,y'\in\mathcal{B}$, the product in the superalgebra $\mathcal{A}\otimes\mathcal{B}$ is 
\[
(x\otimes y)(x'\otimes y')=(-1)^{\bar{x}'\bar{y}}(xx'\otimes yy').
\]  For $x\in\mathcal{A}$, $a\in\{1,\dots,k\}$, denote $1^{\otimes(a-1)}\otimes x\otimes 1^{\otimes(k-a)}\in\mathcal{A}^{\otimes k}$ by $x^{(a)}$.

\subsection{The Lie superalgebra $\gl_{m|n}$}\label{subsec:lie superalgebra gl_{m|n}}
The \emph{Lie superalgebra $\gl_{m|n}$} is spanned  by $e_{i,j}$, $i,j=1,\dots,m+n$, with $\bar{e}_{i,j}=\bar{i}+\bar{j}$ (in the standard parity), and the {\it superbracket} is given by 
\be\label{equation:superbracket}
[e_{i,j},e_{p,q}]=\delta_{j,p}e_{i,q}-(-1)^{(\bar{i}+\bar{j})(\bar{p}+\bar{q})}\delta_{i,q}e_{p,j}.
\ee
The universal enveloping algebra of $\gl_{m|n}$ is denoted by $U\gl_{m|n}$. 

The \emph{Cartan subalgebra} $\mathfrak{h}$ of $\gl_{m|n}$ is spanned by $e_{i,i}$, $i=1,\dots,m+n$. The \emph{weight space} $\mathfrak{h}^*$ is the dual space of $\mathfrak{h}$. Let $\epsilon_i$, $i=1,\dots,m+n$, be a basis of $\mathfrak{h}^*$ such that $\epsilon_i(e_{j,j})=\delta_{i,j}$. We denote the weight $\epsilon_1+\dots +\epsilon_i$ by $\omega_i$, $i=0,\dots,m$. 

Given a weight $\la=\la_1\epsilon_1+\dots+\la_{m+n}\epsilon_{m+n}\in\mathfrak{h}^*$, we also denote $\la$ by the sequence $(\la_1,\dots,\la_{m+n})$. 

The \emph{nilpotent subalgebra} $\mathfrak{n}^+$ of ${\gl_{m|n}}$ (respectively $\mathfrak{n}^-$) is spanned by $e_{i,j}$, $i<j$ (respectively $e_{i,j}$, $i>j$).

\subsection{Representations of $\gl_{m|n}$}\label{subsec:representations} Let $V$ be a $\gl_{m|n}$-module.
Denote $\pi_V: U\gl_{m|n}\to \End(V)$ the corresponding map.

A non-zero vector $v\in V$ is called a \emph{vector of weight $\la$}, if $hv=\la(h)v$ for all $h\in\mathfrak{h}$. A non-zero vector $v$ of weight $\la$ is called a {\it singular vector}, if $\mathfrak{n}^+v=0$.  Denote the subspace of vectors of weight $\la$ in $V$ by $V[\la]$. Denote the subspace of singular vectors in $V$ by $V^{\rm sing}$. Denote $V[\la]\cap V^{\rm sing}$ by $V^{\textup{sing}}[{\la}]$. 

Denote the \emph{highest weight irreducible module of highest weight $\la$} by  $L(\la)$. 
The module  $L(\la)$ is generated by a singular vector $v_\la$ of weight $\la$. The vector $v_\la$ is called 
the \emph{highest weight vector of $L(\la)$}.

A module $V$ is called a \emph{polynomial module} if it is an irreducible submodule of $(\C^{m|n})^{\otimes N}$ for some $N\in \Z_{\geq 0}$. Polynomial modules are modules of the form $L(\mu^\natural)$ where the highest weights $\mu^\natural$ are described by $(m|n)$-hook partitions $\mu$ as follows.

Let $\mu=(\mu_1\geq\mu_2\geq\dots)$ be a partition: $\mu_i\in\Z_{\geq0}$ and $\mu_i=0$ for $i\gg0$. The partition is called an {\it $(m|n)$-hook partition} if $\mu_{m+1}\leq n$. For $i=1,\dots,\mu_1$, let $l_i(\mu)=\max\{j,\mu_j \geq i\}$ be the number of boxes of $\mu$ in the $i$-th column. We also set $l_{\mu_1+1}(\mu)=0$. We have $\mu_{l_i(\mu)}\geq i$ and $\mu_{l_i(\mu)+1}< i$, $i=1,\dots,\mu_1$. We call $l_1(\mu)$ the {\it length of $\mu$}.

Given an $(m|n)$-hook partition $\mu$, the corresponding weight $\mu^\natural=(\mu^\natural_1,\dots, \mu^\natural_{m+n})$ of the corresponding polynomial module $L(\mu^\natural)$ is given by
$\mu^\natural_i=\mu_i$, $i=1,\dots,m$, $\mu^\natural_{m+j}=\max\{l_j(\mu)-m,0\}$, $j=1,\dots,n$. 
We call weights $\mu^\natural$ corresponding to $(m|n)$-partitions the \emph{polynomial $\gl_{m|n}$ weights}.

\subsection{The $\gl_{m|n}$ current algebra and the evaluation modules}\label{gl_{m|n} current algebra and its evaluation modules}
Let $t$ be an even variable.
Let $\gl_{m|n}[t]=\gl_{m|n}\otimes \C[t]$ be the Lie superalgebra of $\gl_{m|n}$ valued polynomials with pointwise superbracket. We call $\gl_{m|n}[t]$ the {\it current algebra}. Denote by $U\gl_{m|n}[t]$ the universal enveloping algebra of $\gl_{m|n}[t]$.

We identify the Lie superalgebra $\gl_{m|n}$ with the subalgebra $\gl_{m|n}\otimes 1$ of constant polynomials in $\gl_{m|n}[t]$. Therefore any $\gl_{m|n}[t]$-module has the canonical structure of a $\gl_{m|n}$-module.

The standard generators of $\gl_{m|n}[t]$ are $e_{i,j}\otimes t^r$, $i,j=1,\dots,m+n$, $r\in\Z_{\geq 0}$. The superbracket is given by  
\be\label{equation:current algebra commutator}
(u-v)[e_{i,j}(u),e_{p,q}(v)]=-[e_{i,j},e_{p,q}](u)+[e_{i,j},e_{p,q}](v),
\ee where
\be\label{equation:formal power series}
e_{i,j}(v)=\sum_{r=0}^{\infty} (e_{i,j}\otimes t^r)v^{-r-1}
\ee are the formal power series.

For each $z\in\C$, there exists a {\it shift of spectral parameter automorphism} $\rho_z$ of $\gl_{m|n}[t]$ sending $g(v)$ to $g(v-z)$ for all $g\in \gl_{m|n}$. Given a $\gl_{m|n}[t]$-module $V$, denote by $V_z$ the pull-back of $V$ through the automorphism $\rho_z$. As $\gl_{m|n}$-modules, $V$ and $V_z$ are isomorphic by the identify map.

We have the {\it evaluation homomorphism}, $ev: \gl_{m|n}[t]\rightarrow\gl_{m|n}$, $ev:g(v)\mapsto gv^{-1}$. For any $\gl_{m|n}$-module $V$, denote by the same letter the $\gl_{m|n}[t]$-module, obtained by pull-back of $V$ through the evaluation homomorphism $ev$. Given a $\gl_{m|n}$-module $V$ and $z\in\C$, the $\gl_{m|n}[t]$-module $V_z$ is called an {\it evaluation module}. The action of $\gl_{m|n}[t]$ in $V_z$ is given by
\be\label{equation:current algebra action}
e_{i,j}(v)w=\frac{e_{i,j}w}{v-z},
\ee for any $w\in V$, $i,j=1,\dots,m+n$.

Note that if $\la^{(1)},\dots,\la^{(k)}$ are polynomial weights and $z_1,\dots,z_k$ are pairwise distinct complex numbers, then the module $\otimes_{a=1}^{k}L(\la^{(a)})_{z_a}$ is irreducible. 

\section{Berezinians of affine Manin matrices}\label{sec:Ber}

In this section, we recall some facts about Berezinians, following \cite{MR}. We give a definition of Berezinians of affine Manin matrices to arbitrary parities and study its properties. 

Let $\mathcal{A}$ be a superalgebra. Given a matrix $A=\big(a_{i,j}\big)_{i,j=1,\dots,m+n}$, $a_{i,j}\in\mathcal{A}$, with a two sided inverse $A^{-1}$, we denote the $(i,j)$ entry of $A^{-1}$ by $\wt{a}_{i,j}$.

\subsection{Berezinian of standard parity}\label{subsec: Ber}
Let $A=\big(a_{i,j}\big)_{i,j=1,\dots,m+n}$ be a matrix  with a two sided inverse. The {\it Berezinian of standard parity} of $A$, see \cite{MR}, is
\begin{align}\label{equation:ber}
\ber A=\Big(\sum_{\si\in\mathfrak{S}_m}\sgn\si\  a_{\si(1),1}\dots a_{\si(m),m}\Big)\times\Big(\sum_{\tau\in\mathfrak{S}_n}\sgn\tau\ \wt{a}_{m+1,m+\tau(1)}\dots \wt{a}_{m+n,m+\tau(n)}\Big),
\end{align} where $\sS_r$ is the symmetric group on $r$ letters. In the case of $n=0$, the above formula is the {\it column determinant} which we denote by $\cdet A$. In the case of $m=0$, the above formula is the {\it row determinant} of the inverse matrix which we denote by ${\rm {rdet}}\ A^{-1}$.

We call $A=\big(a_{i,j}\big)_{i,j=1,\dots,m+n}$, $a_{i,j}\in\mathcal{A}$, a {\it matrix of standard parity over $\mathcal{A}$}, if $\bar{a}_{i,j}=\bar{i}+\bar{j}$. 

We call $A$ a {\it Manin matrix of standard parity}, if $A$ is of standard parity and
\[
[a_{i,j},a_{p,q}]=(-1)^{\bar{i}\bar{j}+\bar{i}\bar{p}+\bar{j}\bar{p}}[a_{p,j},a_{i,q}],\qquad i,j,p,q=1,\dots,m+n.
\]
Many properties of even Manin matrices are known, see \cite{CFR}. Similar properties can be proved in the supersymmetric case, but we need here only a couple of facts which we extract from \cite{MR}.

Let $w$ be an even formal variable. We call $A(w)=\big(a_{i,j}(w)\big)_{i,j=1,\dots,m+n}$ an {\it affine matrix}, if 
\[
a_{i,j}(w)=\sum_{r=0}^{\infty}a_{i,j,r}w^{r},\;a_{i,j,r}\in\mathcal{A},\qquad a_{i,j,0}=\de_{i,j},\ i,j=1,\dots,m+n.
\] 

In other words, an affine matrix is a matrix whose entries $a_{i,j}(w)\in\mathcal{A}[[w]]$ are formal power series in variable $w$ and such that $A(0)=I$. In particular, every affine matrix has a two sided inverse.

Given a Manin matrix $A$ of standard parity, the matrix $(1+wA)$ is an affine Manin matrix of standard parity.

\begin{lem}\cite{MR}\label{lem:inverse is manin}
	Let $A(w)$ be an affine Manin matrix of standard parity. Then the inverse matrix $A^{-1}(w)$ is an affine Manin matrix of standard parity. \qed
\end{lem}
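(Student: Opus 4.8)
The plan is to establish three things separately: that $A^{-1}(w)$ is again affine, that it is of standard parity, and that it satisfies the Manin relations. The first two are straightforward and the Manin relations carry all the content. For affineness, write $A(w)=I+N(w)$ with $N(w)=\sum_{r\geq 1}A_rw^r$ having entries in $w\mathcal{A}[[w]]$. The Neumann series $A^{-1}(w)=\sum_{t\geq 0}(-N(w))^t$ converges $w$-adically in $\mathrm{Mat}_{m+n}(\mathcal{A}[[w]])$ and has constant term $I$, so $A^{-1}(w)$ is affine and in particular $A^{-1}(0)=I$. For standard parity, note $\overline{N(w)_{i,j}}=\bar i+\bar j$, and this pattern is preserved under multiplication, since $\overline{(N^t)_{i,j}}=(\bar i+\bar k_1)+(\bar k_1+\bar k_2)+\dots+(\bar k_{t-1}+\bar j)=\bar i+\bar j$ telescopically in $\Z/2\Z$; hence each coefficient of $\wt a_{i,j}$ is homogeneous of parity $\bar i+\bar j$.

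The Manin relations for $A^{-1}$, where $[\cdot,\cdot]$ denotes the super-commutator, I would derive from a graded "logarithmic derivative" identity for the inverse. Super-commuting the defining relation $\sum_s a_{k,s}\wt a_{s,l}=\de_{k,l}$ with an arbitrary homogeneous $X$ via the graded Leibniz rule, then left-multiplying by $\wt a_{i,k}$, summing over $k$, and using $\sum_k\wt a_{i,k}a_{k,s}=\de_{i,s}$, gives
\[
[\wt a_{i,l},X]=-\sum_{k,s}(-1)^{(\bar s+\bar l)\bar X}\,\wt a_{i,k}\,[a_{k,s},X]\,\wt a_{s,l},
\]
the super analogue of $\de(A^{-1})=-A^{-1}(\de A)A^{-1}$. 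Applying this with $X=\wt a_{p,q}$, and then once more to rewrite the resulting commutators $[a_{k,s},\wt a_{p,q}]$, expresses $[\wt a_{i,l},\wt a_{p,q}]$ (up to Koszul signs and ordering) as a sum of terms $\wt a_{i,k}\,\wt a_{p,k'}\,[a_{k',s'},a_{k,s}]\,\wt a_{s',q}\,\wt a_{s,l}$, i.e. commutators of entries of $A$ sandwiched between four entries of $A^{-1}$.

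The final step is to substitute the Manin relation $[a_{k',s'},a_{k,s}]=(-1)^{\bar k'\bar s'+\bar k'\bar k+\bar s'\bar k}[a_{k,s'},a_{k',s}]$ of $A$ into this expression. The row swap $k\leftrightarrow k'$ inside the bracket feeds through the two outer factors $\wt a_{i,k}$ and $\wt a_{p,k'}$, and after reindexing it converts the expression for $[\wt a_{i,l},\wt a_{p,q}]$ into $(-1)^{\bar i\bar l+\bar i\bar p+\bar l\bar p}[\wt a_{p,l},\wt a_{i,q}]$, which is exactly the Manin relation for $A^{-1}$.

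The main obstacle will be the sign bookkeeping: one must verify that the Koszul signs produced by the two applications of the inverse-derivative identity (including the sign from reversing $[a_{k,s},\wt a_{p,q}]=-(-1)^{(\bar k+\bar s)(\bar p+\bar q)}[\wt a_{p,q},a_{k,s}]$) combine with the sign in the Manin relation of $A$ to yield precisely $(-1)^{\bar i\bar l+\bar i\bar p+\bar l\bar p}$, and that all the "quantum correction" terms cancel so that no extra contributions survive. A conceptually equivalent route, which sidesteps some of the bookkeeping, is to encode the Manin property as the statement that the assignment $\xi_i\mapsto\sum_j a_{i,j}\xi_j$ respects the defining relations of the auxiliary super-Grassmann algebra on variables $\xi_i$ of parity $\bar i+\bar 1$; the invertibility guaranteed by the affine structure then lets one transport these quadratic relations back along $A^{-1}$.
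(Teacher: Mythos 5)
The paper itself gives no proof of this lemma: it is quoted from \cite{MR} (note the \qed directly after the statement), so your proposal has to be judged on its own merits. The affineness part (Neumann series) and the parity part (telescoping) are correct, and so is your key identity: the derivation of
\[
[\wt a_{i,l},X]=-\sum_{k,s}(-1)^{(\bar s+\bar l)\bar X}\,\wt a_{i,k}\,[a_{k,s},X]\,\wt a_{s,l}
\]
via the graded Leibniz rule is valid. The gap is in the final step. Applying the identity twice gives, up to signs,
\[
[\wt a_{i,l},\wt a_{p,q}]=-\sum_{k,s,k',s'}\pm\;\wt a_{i,k}\,\wt a_{p,k'}\,[a_{k',s'},a_{k,s}]\,\wt a_{s',q}\,\wt a_{s,l},
\]
whereas the identical expansion of the target $[\wt a_{p,l},\wt a_{i,q}]$ carries the prefix $\wt a_{p,k}\,\wt a_{i,k'}$ in place of $\wt a_{i,k}\,\wt a_{p,k'}$. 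Substituting the Manin relation of $A$ into the bracket and relabelling $(k,s)\leftrightarrow(k',s')$ does produce the right bracket, but it only exchanges which column indices are attached to the two leftmost factors; it cannot exchange the \emph{positions} of those factors. To match the target you must commute $\wt a_{i,k'}$ past $\wt a_{p,k}$ --- entries of $A^{-1}$ in different rows and different columns, whose commutator is exactly the unknown quantity you are computing. The argument is circular precisely at the point you dismiss as ``sign bookkeeping'': the obstruction is non-commuting factors, not signs.

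A symptom of the problem is that your argument uses affineness only to construct $A^{-1}$; if it were valid, it would prove that the inverse of \emph{any} Manin matrix with a two-sided inverse is Manin, with the affine hypothesis playing no role. But the Manin property does not propagate to inverses by formal algebra alone. In symmetrizer language, the Manin property of $A$ says that $(A\otimes 1)(1\otimes A)$ maps the image of the supersymmetrizer into itself, i.e.\ is ``block-triangular''; over a noncommutative algebra, a block-triangular element with a two-sided inverse need not have a block-triangular inverse unless its diagonal blocks are invertible --- compare $\begin{pmatrix}T&0\\ 1-ST&S\end{pmatrix}$ with $TS=1\neq ST$, whose two-sided inverse is $\begin{pmatrix}S&1-ST\\ 0&T\end{pmatrix}$. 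The affine hypothesis is exactly what forces the relevant diagonal blocks to be of the form $1+O(w)$, hence invertible; this is why the lemma, both here and in \cite{MR}, is stated only for affine matrices, and why a correct proof must use the $w$-adic structure in the Manin part itself --- for instance by induction on the power of $w$, where $[a_{k',s'},a_{k,s}]=O(w^2)$ so that the outer factors need only be reordered modulo information already established at lower order. Your proposal contains no such mechanism. The same criticism applies to your alternative ``conceptual route'': the map $\xi_i\mapsto\sum_j a_{i,j}\xi_j$ is not an invertible algebra endomorphism (its images fail to supercommute with the coefficient algebra $\mathcal A$), so the quadratic relations cannot simply be ``transported back along $A^{-1}$''.
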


For an arbitrary $(m+n)\times(m+n)$ matrix $A$ with a two sided inverse, the {\it $(i,j)$ quasideterminant of $A$} is $\wt{a}_{j,i}^{-1}$. If $\wt{a}_{j,i}^{-1}$ does not exist in $\mathcal A$, then the $(i,j)$ quasideterminant of $A$ is not defined. We write 
\[
\wt{a}_{j,i}^{-1}=\bpm
& a_{1,1} 	& \dots & a_{1,j} &\dots & a_{1,m+n}\\
& \dots	& \dots & \dots  &\dots & \dots	\\
& a_{i,1}& \dots & \fbox{$a_{i,j}$} &\dots &a_{i,m+n}\\
& \dots	& \dots & \dots  &\dots & \dots	\\
& a_{m+n,1}&\dots &a_{m+n,j} &\dots &a_{m+n,m+n}
\epm.
\] 
For $i=1,\dots,m+n$, define the {\it principal quasi-minors of $A$} by
\be\label{equation:d_i}
d_i(A)=\bpm
	& a_{1,1} 	& \dots & a_{1,i}\\
	& \dots	& \dots & \dots \\
	& a_{i,1}& \dots & \fbox{$a_{i,i}$}
\epm. 
\ee 
\medskip

If $A(w)$ is an affine matrix, then the principal quasi-minors $d_i(A(w))$, $i=1,\dots,m+n$, are well defined.

The Berezinian of Manin matrices of standard parity is computed in terms of quasi-minors.
\begin{thm}\cite{MR}\label{thm:MR quasideterminant}
Let $A(w)$ be an affine Manin matrix of standard parity. The Berezinian $\ber A(w)$ admits the quasideterminant factorization:
\begin{align*}\label{equation: MR quasideterminant}
\ber A(w)=d_1(A(w))\dots &d_{m}(A(w))\times d_{m+1}^{-1}(A(w))\dots d_{m+n}^{-1}(A(w)).
\end{align*}
\qed
\end{thm}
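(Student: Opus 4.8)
The plan is to prove the two factors of $\ber A(w)$ separately and then concatenate. Write $A=A(w)$, let $A_{11}=\big(a_{i,j}\big)_{i,j=1}^{m}$ be the top-left block, and let $B=A^{-1}$ with bottom-right block $B_{22}=\big(\wt{a}_{m+i,m+j}\big)_{i,j=1}^{n}$. By the definition \eqref{equation:ber} of the Berezinian, $\ber A=\cdet A_{11}\cdot{\rm rdet}\,B_{22}$. Since the leading principal quasi-minor $d_i(A)$ for $i\le m$ depends only on the entries of $A_{11}$, it suffices to establish
\[
\cdet A_{11}=d_1(A)\cdots d_m(A),\qquad {\rm rdet}\,B_{22}=d_{m+1}^{-1}(A)\cdots d_{m+n}^{-1}(A),
\]
in these orders, and to multiply them.

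For the first identity I would note that in the standard parity every entry of $A_{11}$ is even and the Manin relation restricted to indices $\le m$ is the ordinary even Manin relation; hence $A_{11}(w)$ is an affine even Manin matrix (with $A_{11}(0)=I_m$). The factorization of the column determinant of such a matrix into its ordered leading principal quasi-minors is the classical even result, see \cite{CFR}, \cite{MR}. As the leading quasi-minors $d_i(A_{11})$ coincide with $d_i(A)$ for $i\le m$, this yields the first factor.

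For the second identity the key input is Lemma \ref{lem:inverse is manin}: $B=A^{-1}$ is again an affine Manin matrix of standard parity. Denote by $A^{[k]}$ the top-left $k\times k$ submatrix (affine, hence invertible) and by $\wt{a}^{[k]}_{i,j}$ the entries of $(A^{[k]})^{-1}$; by the definition of the quasideterminant, $d_k^{-1}(A)=\wt{a}^{[k]}_{k,k}$. These families of inverses are linked by the Schur-complement identity
\[
\wt{a}^{[k-1]}_{i,j}=\wt{a}^{[k]}_{i,j}-\wt{a}^{[k]}_{i,k}\big(\wt{a}^{[k]}_{k,k}\big)^{-1}\wt{a}^{[k]}_{k,j},\qquad i,j\le k-1,
\]
which holds over any ring once the relevant inverses exist. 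I would then induct on $n$, the base case $n\le 1$ being immediate since there $\wt{a}^{[m+1]}_{m+1,m+1}=\wt{a}_{m+n,m+n}$. Peeling off the index $N=m+n$, one recognizes the Schur complement of the pivot $\wt{a}_{N,N}$ inside $B_{22}$ as exactly the size-$(n-1)$ bottom-right block of $(A^{[N-1]})^{-1}$, so the induction hypothesis identifies its row determinant with $d_{m+1}^{-1}(A)\cdots d_{m+n-1}^{-1}(A)$. The step thus reduces to proving the row-determinant Schur expansion ${\rm rdet}\,B_{22}={\rm rdet}(\text{Schur complement of }\wt{a}_{N,N})\cdot\wt{a}_{N,N}$.

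The main obstacle is precisely this last collapse: substituting the Schur identity into the size-$n$ row determinant produces extra quantum corrections, and one must show they vanish. This is where the Manin property of $B=A^{-1}$ is indispensable. Since all indices exceeding $m$ are odd, the standard-parity relation with $i=p=N$ carries the sign $(-1)^{\bar i\bar j+\bar i\bar p+\bar j\bar p}=-1$, forcing $[\wt{a}_{N,j},\wt{a}_{N,N}]=0$ for the relevant $j$; these commutations are exactly what let the pivot $\wt{a}_{N,N}^{-1}$ pass through the off-diagonal entries so that the corrections reassemble into the row determinant. Organizing this sign bookkeeping uniformly in $n$—verifying that every correction created by the Schur substitution is annihilated by a Manin commutator—is the technical heart of the argument; everything else is formal.
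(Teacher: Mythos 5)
Your overall architecture is sound, and it should be said that the paper itself gives no proof of this theorem at all: it is imported from \cite{MR} with a \qed, so your attempt can only be judged on its own merits. The first half of your argument is fine. By definition \eqref{equation:ber}, $\ber A=\cdet A_{11}\cdot {\rm rdet}\, B_{22}$; the block $A_{11}(w)$ is an affine even Manin matrix, so the even factorization $\cdet A_{11}=d_1(A)\cdots d_m(A)$ is legitimately quotable from \cite{CFR}; your Schur identity
\[
\wt{a}^{[k-1]}_{i,j}=\wt{a}^{[k]}_{i,j}-\wt{a}^{[k]}_{i,k}\bigl(\wt{a}^{[k]}_{k,k}\bigr)^{-1}\wt{a}^{[k]}_{k,j}
\]
is a correct ring-theoretic fact, and it does identify the Schur complement of the pivot $\wt{a}_{N,N}$ inside $B_{22}$ with the bottom-right block of $(A^{[N-1]})^{-1}$, so the induction correctly reduces the whole theorem to the single collapse identity ${\rm rdet}\,B_{22}={\rm rdet}(C')\cdot\wt{a}_{N,N}$, where $C'_{i,j}=\wt{a}_{i,j}-\wt{a}_{i,N}\wt{a}_{N,N}^{-1}\wt{a}_{N,j}$. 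Note also that you cannot shortcut this via Proposition \ref{prop:blockwise change} of the paper, since that proposition is itself a consequence of the theorem you are proving.

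The genuine gap is that this collapse identity is exactly where the theorem lives, and the mechanism you offer for it is insufficient. The relation $[\wt{a}_{N,j},\wt{a}_{N,N}]=0$ (row commutativity of $B_{22}$, which you derive correctly) does finish the case $n=2$: there the only quantum correction is $\wt{a}_{1N}\wt{a}_{NN}^{-1}\wt{a}_{N1}\wt{a}_{NN}$, and commuting $\wt{a}_{N1}$ past $\wt{a}_{NN}$ does the job. But for $n\geq 3$ the expansion of ${\rm rdet}(C')\cdot\wt{a}_{N,N}$ contains products of two or more correction factors, and these neither vanish nor cancel pairwise; they must be recombined with the single-correction terms using the full cross relations $[\wt{a}_{i,j},\wt{a}_{p,q}]=-[\wt{a}_{p,j},\wt{a}_{i,q}]$ of the inverse matrix. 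Concretely, already for $n=3$, $m=0$ (writing $c_{i,j}=\wt{a}_{i,j}$), matching the terms in which row $1$ uses column $3$ amounts to the cubic identity
\[
c_{33}\bigl(c_{21}c_{32}-c_{22}c_{31}\bigr)=c_{32}c_{21}c_{33}-c_{31}c_{22}c_{33}+c_{31}c_{23}c_{32}-c_{32}c_{23}c_{31},
\]
whose verification needs $[c_{21},c_{32}]=-[c_{31},c_{22}]$, $[c_{33},c_{21}]=-[c_{23},c_{31}]$, $[c_{33},c_{22}]=-[c_{23},c_{32}]$ and $[c_{31},c_{32}]=0$ combined in a nontrivial order --- nothing here is "the pivot passing through." Worse, when $m=0$ the collapse identity \emph{is} the theorem (the all-odd case ${\rm rdet}\,A^{-1}=d_1^{-1}(A)\cdots d_n^{-1}(A)$), so what you have labeled "sign bookkeeping" and left unproved is not a formal verification but the entire non-even content of the result. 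To close the gap you would need an organized general-$n$ cancellation argument (or an odd analogue of the Schur-complement theorem of \cite{CFR} for matrices whose transpose is Manin), and that argument is the theorem's actual proof.
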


\subsection{Berezinian of general parity}\label{subsec:Berezinian of different parities}
Fix a parity sequence $\bs s\in S_{m|n}$, see Section \ref{subsec:superspace and superalgebra}.

We call $A=\big(a_{i,j}\big)_{i,j=1,\dots,m+n}$, $a_{i,j}\in\mathcal{A}$,  a {\it matrix of parity $\bs s$}, if $\bar{a}_{i,j}=\bar{i}^{\bs s}+\bar{j}^{\bs s}$.  
Note that $0$ is both odd and even, in particular, the zero and the identity matrices are matrices of arbitrary parity $\bs s$.

We call $A$ a {\it Manin matrix of parity $\bs s $} if $A$ is of parity $\bs s$ and 
\[
[a_{i,j},a_{p,q}]=(-1)^{\bar{i}^{\bs s}\bar{j}^{\bs s}+\bar{i}^{\bs s}\bar{p}^{\bs s}+\bar{j}^{\bs s}\bar{p}^{\bs s}}[a_{p,j},a_{i,q}],\qquad i,j,p,q=1,\dots,m+n.
\] 

The symmetric groups $\sS_{m+n}$ acts on matrices and parities by the following rule.
For $\si\in\sS_{m+n}$, we set  $\si(A)=\si A\si^{-1}=\big(a_{\si^{-1}(i),\si^{-1}(j)}\big)_{i,j=1,\dots,m+n}$ and $\si(\bs s)=(s_{\si^{-1}(1)},\dots,s_{\si^{-1}(m+n)})$.

The following lemma is straightforward. 
\begin{lem}\label{lem:permutation of manin is manin}
	Let $A$ be a Manin matrix of parity $\bs s $. Then $\si(A)$ is a Manin matrix of parity $\si(\bs s)$.\qed
\end{lem}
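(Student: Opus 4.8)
The statement to prove is Lemma~\ref{lem:permutation of manin is manin}: if $A$ is a Manin matrix of parity $\bs s$, then $\si(A)$ is a Manin matrix of parity $\si(\bs s)$.

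\medskip

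The plan is to verify the two defining conditions of a Manin matrix of parity $\si(\bs s)$ directly from the definitions, using the fact that permutation simply relabels indices. First I would unpack the notation: writing $A'=\si(A)$ and $\bs s'=\si(\bs s)$, we have $a'_{i,j}=a_{\si^{-1}(i),\si^{-1}(j)}$ and $\bar\imath^{\bs s'}=\overline{\si^{-1}(i)}^{\bs s}$, this last equality being immediate from $s'_i=s_{\si^{-1}(i)}$ together with the definition of $\bar\imath^{\bs s}$ in terms of the sign $s_i$. The parity condition is then the short computation $\bar a'_{i,j}=\bar a_{\si^{-1}(i),\si^{-1}(j)}=\overline{\si^{-1}(i)}^{\bs s}+\overline{\si^{-1}(j)}^{\bs s}=\bar\imath^{\bs s'}+\bar\jmath^{\bs s'}$, where the middle equality uses that $A$ has parity $\bs s$. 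So $A'$ is a matrix of parity $\bs s'$.

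\medskip

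Next I would check the commutation relation. Set $p'=\si^{-1}(i)$, $q'=\si^{-1}(j)$, $p''=\si^{-1}(p)$, $q''=\si^{-1}(q)$ so that $a'_{i,j}=a_{p',q'}$, $a'_{p,q}=a_{p'',q''}$, $a'_{p,j}=a_{p'',q'}$, and $a'_{i,q}=a_{p',q''}$. Applying the Manin relation for $A$ of parity $\bs s$ to the indices $p',q',p'',q''$ gives
\[
[a'_{i,j},a'_{p,q}]=[a_{p',q'},a_{p'',q''}]=(-1)^{\bar{p'}^{\bs s}\bar{q'}^{\bs s}+\bar{p'}^{\bs s}\bar{p''}^{\bs s}+\bar{q'}^{\bs s}\bar{p''}^{\bs s}}[a_{p'',q'},a_{p',q''}].
\]
Now I would rewrite every exponent using $\bar{p'}^{\bs s}=\bar\imath^{\bs s'}$, $\bar{q'}^{\bs s}=\bar\jmath^{\bs s'}$, $\bar{p''}^{\bs s}=\bar p^{\bs s'}$, and identify $[a_{p'',q'},a_{p',q''}]=[a'_{p,j},a'_{i,q}]$, which yields exactly the Manin relation for $A'$ of parity $\bs s'$ with the sign $(-1)^{\bar\imath^{\bs s'}\bar\jmath^{\bs s'}+\bar\imath^{\bs s'}\bar p^{\bs s'}+\bar\jmath^{\bs s'}\bar p^{\bs s'}}$.

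\medskip

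I expect no serious obstacle here, since conjugation by a permutation matrix is a purely cosmetic relabeling of the indices that preserves the structure of both the parity assignment and the quadratic relations; the only thing requiring care is bookkeeping, namely confirming that the exponent in the sign depends only on the ``outer'' indices in a way that matches after substitution. The slight subtlety worth a sentence in the write-up is that one must track which indices play the role of $i,j,p,q$ after the substitution $p'=\si^{-1}(i)$ etc., but since the Manin sign is symmetric in the relevant pair of indices in precisely the manner needed, the identity transfers with no extra terms. Hence the lemma follows, as the problem statement already anticipates by calling it straightforward.
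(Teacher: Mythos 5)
Your proof is correct: the paper states this lemma without proof (declaring it straightforward), and your direct verification---relabeling indices via $\si^{-1}$, checking the parity condition, and transporting the Manin relation with the sign $(-1)^{\bar\imath^{\bs s'}\bar\jmath^{\bs s'}+\bar\imath^{\bs s'}\bar p^{\bs s'}+\bar\jmath^{\bs s'}\bar p^{\bs s'}}$---is exactly the routine check the authors had in mind.
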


Lemma \ref{lem:inverse is manin} is extended to affine Manin matrices of arbitrary parities.
\begin{lem}\label{lem:inverse is manin of parity}
	Let $A(w)$ be an affine Manin matrix of parity $\bs s$. Then $A^{-1}(w)$ is an affine Manin matrix of parity ${\bs s}$.
\end{lem}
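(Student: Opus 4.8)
The plan is to reduce the claim to the already-established standard-parity case, Lemma \ref{lem:inverse is manin}, by conjugating with a permutation that straightens out the parity sequence. Since every parity sequence $\bs s \in S_{m|n}$ has exactly $m$ entries equal to $+1$, I can choose $\si\in\sS_{m+n}$ with $\si(\bs s)=\bs s_0$, the standard sequence. By Lemma \ref{lem:permutation of manin is manin}, the matrix $\si(A(w))=\si A(w)\si^{-1}$ is then a Manin matrix of parity $\si(\bs s)=\bs s_0$, i.e.\ of standard parity. I would also check that $\si(A(w))$ is still affine: its entries are reindexings of the $a_{i,j}(w)\in\mathcal{A}[[w]]$, hence remain formal power series, and $\si(A(0))=\si I\si^{-1}=I$. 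Thus $\si(A(w))$ is an affine Manin matrix of standard parity, and Lemma \ref{lem:inverse is manin} applies to it.

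The second ingredient is that the permutation action commutes with matrix inversion, so that $\si(A^{-1}(w))=\big(\si(A(w))\big)^{-1}$. I would establish this from the multiplicativity $\si(AB)=\si(A)\,\si(B)$, which follows by the direct reindexing computation
\beu
\big(\si(A)\si(B)\big)_{i,j}=\sum_{k}a_{\si^{-1}(i),\si^{-1}(k)}\,b_{\si^{-1}(k),\si^{-1}(j)}=\sum_{l}a_{\si^{-1}(i),l}\,b_{l,\si^{-1}(j)}=\big(\si(AB)\big)_{i,j},
\eeu
where the middle equality is the substitution $l=\si^{-1}(k)$. The point worth emphasizing, and the place I would be most careful, is that the action $\si(A)=\big(a_{\si^{-1}(i),\si^{-1}(j)}\big)$ is a pure relabeling of indices and so introduces no Koszul signs; consequently the identity $\si(A^{-1})=\si(A)^{-1}$ holds without sign corrections despite working over a superalgebra. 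Combined with the previous paragraph, this shows that $\si(A^{-1}(w))=\big(\si(A(w))\big)^{-1}$ is an affine Manin matrix of standard parity.

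Finally I would transport the conclusion back by applying $\si^{-1}$. Since $\si(\bs s)=\bs s_0$ gives $\si^{-1}(\bs s_0)=\bs s$, Lemma \ref{lem:permutation of manin is manin} shows that
\beu
\si^{-1}\big(\si(A^{-1}(w))\big)=A^{-1}(w)
\eeu
is a Manin matrix of parity $\si^{-1}(\bs s_0)=\bs s$. It remains affine because $A^{-1}(0)=I$. This yields exactly the assertion that $A^{-1}(w)$ is an affine Manin matrix of parity $\bs s$. The argument is essentially a diagram-chase through the two earlier lemmas; I anticipate no genuine obstacle, only the bookkeeping check that the permutation action is sign-free and multiplicative, so that inversion and conjugation commute.
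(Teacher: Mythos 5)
Your proposal is correct and follows essentially the same route as the paper's own proof: straighten the parity with a permutation $\si$ so that $\si(\bs s)=\bs s_0$, apply Lemma \ref{lem:permutation of manin is manin} and Lemma \ref{lem:inverse is manin}, use $\si(A^{-1}(w))=(\si(A(w)))^{-1}$, and transport back via $\si^{-1}$. The only difference is that you spell out the verifications (multiplicativity of the reindexing action, absence of Koszul signs, and affineness of the conjugated matrix) that the paper leaves implicit.
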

\begin{proof}
There exists $\si\in\sS_{m+n}$ such that $\si(\bs s)=\bs s_0$. By Lemma \ref{lem:permutation of manin is manin}, $\si(A(w))$ is an affine matrix of standard parity. By Lemma \ref{lem:inverse is manin}, the matrix 
$(\si(A(w)))^{-1}$ is an affine Manin matrix of standard parity. We have $(\si(A(w)))^{-1}=\si(A^{-1}(w))$. Therefore by Lemma \ref{lem:permutation of manin is manin}, the matrix $A^{-1}(w)=\si^{-1}((\si(A(w)))^{-1})$ is an affine Manin matrix of parity $\bs s$.
\end{proof}

\medskip

Let $A(w)$ be an affine Manin matrix of parity $\bs s$. We define the {\it Berezinian of parity $\bs s$} of $A(w)$ by 
\be\label{equation:ber for any parity}
\bers A(w)=d_1^{s_1}(A(w))\dots d_{m+n}^{s_{m+n}}(A(w)).
\ee By Theorem \ref{thm:MR quasideterminant}, definition \eqref{equation:ber for any parity} coincides with definition \eqref{equation:ber} in the case of standard parity.

\medskip

Let $A(w)$ be an affine Manin matrix of parity $\bs s$. 
Fix $r\in\{1,\dots,m+n\}$ and consider the corresponding blocks. Namely, let $W(w),X(w),Y(w),Z(w)$ be submatrices of $A(w)=\bpm W(w) & X(w)\\ Y(w)&Z(w)\epm$ 
of size $r\times r$, $r\times(m+n-r)$, $(m+n-r)\times r$, and $(m+n-r)\times (m+n-r)$ respectively.

 Then $W(w)$ and $Z(w)$ are affine Manin matrices of parities $\bs{s}|^r$ and $\bs{s}|_{m+n-r}$, where 
$\bs{s}|^r=(s_1,\dots,s_r)$ and $\bs{s}|_{m+n-r}=(s_{r+1},\dots,s_{m+n})$.

We have the Gauss decomposition:
\be\label{inv}
			A(w)=\bpm W(w) & X(w)\\ Y(w)&Z(w)\epm=\bpm1&0\\Y(w)W^{-1}(w)&1\epm\bpm W(w) & X(w)\\ 0&Z(w)-Y(w)W^{-1}(w)X(w)\epm.
	\ee
The next proposition claims that the Gauss decomposition is compatible with the definition of Berezinian.
\begin{prop}\label{prop:blockwise change}
The matrices $W(w)$ and $Z(w)-Y(w)W^{-1}(w)X(w)$ are affine Manin matrices.  We have
	\be\label{equation:blockwise ber}
	\bers A(w)={\rm Ber}^{\bs{s}|^r}\, W(w)\times {\rm Ber}^{\bs{s}|_{m+n-r}}\, \left(Z(w)-Y(w)W^{-1}(w)X(w)\right).
	\ee
\end{prop}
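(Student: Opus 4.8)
The plan is to reduce the whole statement to the definition \eqref{equation:ber for any parity} of the Berezinian as an ordered product of principal quasi-minors and then to track how these quasi-minors split along the block decomposition. Throughout write $S(w)=Z(w)-Y(w)W^{-1}(w)X(w)$ for the Schur complement appearing in \eqref{inv}.

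First I would settle the assertion that $W(w)$ and $S(w)$ are affine Manin matrices of the stated parities. The case of $W(w)$ is already recorded above; more generally, a principal submatrix of a Manin matrix of parity $\bs s$ (one using the same index set for rows and columns) is again a Manin matrix of the correspondingly restricted parity, because the defining relation restricted to indices drawn from that set involves only entries of the submatrix and signs depending only on the restricted parities, and affineness is clear since a diagonal block of an affine matrix again reduces to $I$ at $w=0$. For $S(w)$ I would avoid computing with its entries directly and instead use the inverse. From \eqref{inv} one reads off the block inverse of $A(w)$, whose bottom-right $(m+n-r)\times(m+n-r)$ block equals $S^{-1}(w)$; thus $S(w)$ is the inverse of that block. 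By Lemma \ref{lem:inverse is manin of parity}, $A^{-1}(w)$ is an affine Manin matrix of parity $\bs s$, so its bottom-right principal block is an affine Manin matrix of parity $\bs{s}|_{m+n-r}$ by the submatrix remark; applying Lemma \ref{lem:inverse is manin of parity} once more to this block shows that $S(w)$ is an affine Manin matrix of parity $\bs{s}|_{m+n-r}$.

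Next I would prove the two quasi-minor identities
\[
d_i(A(w))=d_i(W(w)),\quad 1\le i\le r,\qquad d_{r+j}(A(w))=d_j(S(w)),\quad 1\le j\le m+n-r .
\]
The first holds because $d_i$ depends only on the leading $i\times i$ submatrix, which for $i\le r$ lies entirely inside $W(w)$. For the second I would apply the block inverse formula to the leading $(r+j)\times(r+j)$ submatrix of $A(w)$, partitioned so that its top-left block is $W(w)$: the bottom-right block of the inverse of this submatrix is the inverse of the corresponding Schur complement, which — since it uses only the first $j$ rows of $Y(w)$ and the first $j$ columns of $X(w)$ — is exactly the leading $j\times j$ block of $S(w)$. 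Reading off its $(j,j)$ entry and inverting yields $d_{r+j}(A(w))=d_j(S(w))$ straight from the definition $d_i=\wt{a}_{i,i}^{-1}$ of a principal quasi-minor.

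Finally, grouping the ordered product in \eqref{equation:ber for any parity} into its first $r$ and last $m+n-r$ factors and inserting the two identities gives
\[
\bers A(w)=\Bigl(\prod_{i=1}^{r}d_i^{s_i}(W(w))\Bigr)\Bigl(\prod_{j=1}^{m+n-r}d_j^{s_{r+j}}(S(w))\Bigr)={\rm Ber}^{\bs{s}|^r} W(w)\times {\rm Ber}^{\bs{s}|_{m+n-r}}\, S(w),
\]
where I used $\bs{s}|^r=(s_1,\dots,s_r)$ and $\bs{s}|_{m+n-r}=(s_{r+1},\dots,s_{m+n})$; only a regrouping of the (ordered) factors is involved, so no permutation of non-commuting elements is needed. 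I expect the one step requiring more than formal manipulation to be the Manin property of the Schur complement $S(w)$; the key observation that makes it painless is the identity $S(w)=\bigl((A^{-1}(w))_{22}\bigr)^{-1}$, which turns it into two applications of Lemma \ref{lem:inverse is manin of parity} together with the elementary fact that principal diagonal blocks inherit the Manin property.
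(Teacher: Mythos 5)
Your proof is correct and follows essentially the same route as the paper: the Manin property of the Schur complement via $S^{-1}(w)=(A^{-1}(w))_{22}$ and two applications of Lemma \ref{lem:inverse is manin of parity}, the quasi-minor identities $d_i(A(w))=d_i(W(w))$ and $d_{r+j}(A(w))=d_j(S(w))$ obtained from block inversion of the leading principal submatrices, and finally regrouping the ordered product defining $\bers A(w)$. The only difference is presentational: you make explicit the facts (principal submatrices inherit the Manin property, the leading block of $S(w)$ is the small Schur complement) that the paper leaves implicit in its display \eqref{equation:block decompo}.
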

\begin{proof}
The matrix $\big(Z(w)-Y(w)W^{-1}(w)X(w)\big)^{-1}$ is a submatrix of $A^{-1}(w)$, see \eqref{inv}. Therefore, by Lemma \ref{lem:inverse is manin of parity}, the matrix $\big(Z(w)-Y(w)W^{-1}(w)X(w)\big)^{-1}$ is an affine Manin matrix of parity $\bs s|_{m+n-r}$, which implies in turn that $Z(w)-Y(w)W^{-1}(w)X(w)$ is an affine Manin matrix of parity $\bs s|_{m+n-r}$.

	For $i=r+1,\dots,m+n$, denote by $X(w)|_{i}$ the submatrix of size $r\times(i-r)$  formed by the first $(i-r)$ columns of $X(w)$, denote by $Y(w)|^{i}$ the submatrix of size $(i-r)\times r$ formed by the first $(i-r)$ rows of $Y(w)$, and denote by $Z(w)|_{i}^{i}$ the top left $(i-r)\times(i-r)$ submatrix of $Z(w)$. Similar to \eqref{inv}, we have
	\be\label{equation:block decompo}
			\bpm W(w) & X(w)|_{i}\\ Y(w)|^{i}&Z(w)|_{i}^{i}\epm^{-1}\hspace{-10pt}=\bpm W(w) & X(w)|_{i}\\ 0&Z(w)|_{i}^{i}-Y(w)|^{i}W^{-1}(w)X(w)|_{i}\epm^{-1}\hspace{-5pt}\bpm1&0\\-Y(w)|^{i}W^{-1}(w)&1\epm.
	\ee
	
	From the definition of principal quasi-minors, we have $d_i(A(w))=d_i(W(w))$, $i=1,\dots,r$.
	From \eqref{equation:block decompo}, we have 
	\be\label{equation: principal quasi-minors}
	d_i(A(w))=d_{i-r}(Z(w)-Y(w)W^{-1}(w)X(w)),\ i=r+1,\dots,m+n.
	\ee
\end{proof}
Now we can prove that the action of $\sS_{m+n}$ does not change the Berezinian.
\begin{prop}\label{prop:manin matrix row-column change} Let $A(w)$ be an affine Manin matrix of parity $\bs s$. Let $\si\in\sS_{m+n}$.
	We have \be\label{equation:manin matrix row-column change}
	\bers A(w)={\rm Ber}^{\si(\bs s)}\, \si(A(w)).
	\ee
\end{prop}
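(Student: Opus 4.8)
The plan is to reduce the identity to an adjacent transposition acting on a $2\times 2$ block, where it becomes a short computation with the Manin relations.

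First I would note that the identity behaves well under composition. For $\si\in\sS_{m+n}$ let $P(\si)$ denote the assertion that $\bers A(w)={\rm Ber}^{\si(\bs s)}\,\si(A(w))$ for every parity $\bs s$ and every affine Manin matrix $A(w)$ of parity $\bs s$. If $P(\tau)$ and $P(\si)$ hold, then for $A(w)$ of parity $\bs s$ we first get $\bers A(w)={\rm Ber}^{\tau(\bs s)}\,\tau(A(w))$, and since $\tau(A(w))$ is again an affine Manin matrix of parity $\tau(\bs s)$ by Lemma \ref{lem:permutation of manin is manin}, a second application yields ${\rm Ber}^{\tau(\bs s)}\,\tau(A(w))={\rm Ber}^{\si\tau(\bs s)}\,\si\tau(A(w))$; hence $P(\si\tau)$ holds. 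Since $\sS_{m+n}$ is generated by the adjacent transpositions $\si_r=(r,r+1)$, it is enough to prove $P(\si_r)$ for each $r$.

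Next I would strip off the coordinates that $\si_r$ does not move by means of Proposition \ref{prop:blockwise change}. Splitting at $r-1$ factors $\bers A(w)$ as ${\rm Ber}^{\bs s|^{r-1}}\,W(w)$ times the Berezinian of the Schur complement $\wt A(w)=Z(w)-Y(w)W^{-1}(w)X(w)$. The transposition $\si_r$ fixes $1,\dots,r-1$ and acts on the remaining indices as $\si_1=(1,2)$, so it is block diagonal for this splitting; a direct check then shows that $W(w)$ is unchanged and that the Schur complement intertwines, $\wt{\si_r(A(w))}=\si_1(\wt A(w))$, where $\si_1$ now acts on the $(m+n-r+1)\times(m+n-r+1)$ matrix $\wt A(w)$. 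Thus $P(\si_r)$ reduces to $P(\si_1)$ for $\wt A(w)$. Applying Proposition \ref{prop:blockwise change} once more, this time to $\wt A(w)$ split at $2$, the transposition $\si_1$ leaves the trailing Schur complement fixed and merely interchanges the top-left $2\times 2$ block $W'(w)$, so the whole statement collapses to the $2\times 2$ identity ${\rm Ber}^{(s_r,s_{r+1})}\,W'(w)={\rm Ber}^{(s_{r+1},s_r)}\,\si_1(W'(w))$.

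Finally I would verify this $2\times 2$ identity directly. Writing $W'(w)=\bpm a&b\\ c&d\epm$, definition \eqref{equation:ber for any parity} turns the claim into $a^{s_r}(d-ca^{-1}b)^{s_{r+1}}=d^{s_{r+1}}(a-bd^{-1}c)^{s_r}$, which I would treat according to the parity type. When $s_r=s_{r+1}$ this is the invariance of the $2\times 2$ (inverse) determinant under a simultaneous transposition of rows and columns, which follows from the cross relation $[a,d]=\pm[c,b]$ together with the commutation of entries in a common column. In the mixed case, say $(s_r,s_{r+1})=(1,-1)$, multiplying out the inverses reduces the claim to $da=(a-bd^{-1}c)(d-ca^{-1}b)$; using $ac=ca$ and $cd=dc$ the right-hand side becomes $ad-cb-bc+bd^{-1}c^2a^{-1}b$, so the identity amounts to $bd^{-1}c^2a^{-1}b=da-ad+cb+bc$. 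Its right-hand side vanishes by the cross relation $ad-da=bc+cb$, while the left-hand side vanishes because the Manin self-relation at the odd entry $c$ forces $c^2=0$; the opposite mixed case then follows since $\si_1$ is an involution. I expect this mixed-parity computation to be the only genuine obstacle: it is exactly where non-commutativity could create a quantum correction, and one must invoke the nilpotency $c^2=0$ of the odd entry to see that correction cancel.
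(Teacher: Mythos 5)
Your proof is correct and follows essentially the same route as the paper: reduce to adjacent transpositions, use the blockwise factorization of Proposition \ref{prop:blockwise change} (Schur complements) to collapse the statement to $2\times 2$ affine Manin matrices, and finish by a direct computation --- the computation the paper omits, and which you carry out correctly in the crucial mixed-parity case (the cross relation $ad-da=bc+cb$ plus the nilpotency $c^2=0$ of the odd entry). One cosmetic slip: when $s_r=s_{r+1}=-1$ all four entries are even and it is the \emph{rows} of such a Manin matrix that commute (columns need not), so that equal-parity case should invoke row commutation together with the sign-flipped cross relation $[a,d]=-[c,b]$; the argument is otherwise identical.
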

\begin{proof}
	It suffices to consider $\si=(i,i+1)$, $i=1,\dots,m+n-1$. Moreover, it is sufficient to show \[d_i^{s_i}(A(w))d_{i+1}^{s_{i+1}}(A(w))=d_i^{s_{i+1}}(\si(A(w)))d_{i+1}^{s_i}(\si(A(w))).\]
	Without losing generality we treat the case $i=m+n-1$.
	
	Consider the block decomposition of $A(w)$ with $r=m+n-2$. In particular, $Z(w)$ is a $2\times 2$ matrix. 
	By \eqref{equation: principal quasi-minors} with $i=m+n-1, m+n$, \[
	d_{m+n-1}(A(w))=d_1(Z(w)-Y(w)W^{-1}(w)X(w)),\quad d_{m+n}(A(w))=d_2(Z(w)-Y(w)W^{-1}(w)X(w)),
	\]
	and
	\begin{align*}
		&d_{m+n-1}(\si (A(w)))=d_1(\bar\si(Z(w)-Y(w)W^{-1}(w)X(w))),\\ &d_{m+n}(\si(A(w)))=d_2(\bar\si(Z(w)-Y(w)W^{-1}(w)X(w))),
	\end{align*} 
	where $\bar \si=(1,2)\in\sS_2$.
	
	Thus, the proposition is reduced to the case of $2\times 2$ affine Manin matrices. This is proved by a direct computation.
\end{proof}

\subsection{Affine-like Manin matrices}
We extend the results on Berezinians of affine matrices to another class of matrices which we call affine-like matrices.

Denote $\mathcal{A}((w))$  the superalgebra of formal Laurent series in $w$ with coefficients in $\mathcal{A}$,
$$
\mathcal{A}((w))=\{\sum_{r=-N}^\infty b_r w^r, \ N\in\Z,\ b_r\in\mathcal A\}.
$$

Let $A=\big(a_{i,j}\big)_{i,j=1,\dots,m+n}$ be a matrix of parity $\bs s$ with entries $a_{i,j}$ in $\mathcal A$.
We call $A$ an {\it affine-like matrix of parity $\bs s$} if the following two conditions are met:
\begin{itemize}
\item for any subset $\mathfrak a\subset\{1,\dots,m+n\}$, the matrix $A_{\mathfrak a}=\big(a_{i,j}\big)_{i,j\in\mathfrak a}$ has a two sided inverse with entries in $\mathcal A$ and the diagonal entries of $A^{-1}_{\mathfrak a}$ are invertible in $\mathcal A$. 
\item there exists an injective homomorphism of superalgebras $\Phi_A:\mathcal A\to \mathcal A((w))$ such that $a_{i,j}\mapsto a_{i,j}+\de_{i,j}w^{-1}$. 
\end{itemize}

If $A$ is an affine-like matrix, then the principal quasi-minors $d_i(A)$ are well-defined. If $A$ is an affine-like matrix then $\si(A)$ is affine-like for any $\si\in\sS_{m+n}$.

Our definition is motivated by the following simple observation.

\begin{lem}
If $A$ is an affine-like matrix, then $w\Phi_A(A)=1+wA$ is an affine matrix. Moreover, we have $\Phi_A(A^{-1})=(\Phi_A(A))^{-1}$ and
$\Phi_A(d_i(A))=d_i(\Phi_A(A))$, $i=1,\dots,m+n$. 

If  $A$ is an affine-like Manin matrix of parity $\bs s$, then $w\Phi_A(A)$ is an affine Manin matrix  of parity $\bs s$ and $A^{-1}$ is also an affine-like Manin matrix of parity $s$. \qed
\end{lem}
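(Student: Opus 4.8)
The plan is to verify each assertion in turn by leveraging the injective homomorphism $\Phi_A$ to transport questions about the affine-like matrix $A$ to the already-established theory of affine matrices. First I would establish that $w\Phi_A(A) = 1 + wA$ is an affine matrix. Writing $B := \Phi_A(A)$, the $(i,j)$ entry of $wB$ is $w\Phi_A(a_{i,j}) = w(a_{i,j} + \de_{i,j}w^{-1}) = \de_{i,j} + wa_{i,j}$, which lies in $\mathcal A[[w]]$ and equals $\de_{i,j}$ at $w=0$. Hence $wB$ satisfies the defining conditions of an affine matrix, and in particular has a two-sided inverse.

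Next I would prove $\Phi_A(A^{-1}) = (\Phi_A(A))^{-1}$. The affine-like hypothesis (first bullet, with $\mathfrak a = \{1,\dots,m+n\}$) guarantees that $A^{-1}$ exists with entries in $\mathcal A$. Since $\Phi_A$ is a homomorphism of superalgebras, applying it to $A A^{-1} = A^{-1} A = I$ entrywise gives $\Phi_A(A)\Phi_A(A^{-1}) = \Phi_A(A^{-1})\Phi_A(A) = I$, so $\Phi_A(A^{-1})$ is a two-sided inverse of $\Phi_A(A)$; by uniqueness of inverses it equals $(\Phi_A(A))^{-1}$. The same argument applied to each submatrix $A_{\mathfrak a}$ shows $\Phi_A(A_{\mathfrak a}^{-1}) = (\Phi_A(A_{\mathfrak a}))^{-1}$, which is what is needed for the quasi-minor identity. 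Indeed, by definition $d_i(A)$ is the inverse of the $(i,i)$ entry of $A_{\{1,\dots,i\}}^{-1}$; since $\Phi_A$ is a homomorphism it commutes with taking the $(i,i)$ entry and with inversion of that entry (the entry is invertible in $\mathcal A$ by the affine-like hypothesis, and $\Phi_A$ sends invertible elements to invertible elements), yielding $\Phi_A(d_i(A)) = d_i(\Phi_A(A))$.

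For the Manin assertions, suppose now $A$ is an affine-like Manin matrix of parity $\bs s$. I would first check that $wB = 1 + wA$ is an affine Manin matrix of parity $\bs s$: the remark preceding Lemma \ref{lem:inverse is manin} states precisely that $1 + wA$ is an affine Manin matrix whenever $A$ is Manin, and one verifies the parity and the Manin commutation relations are preserved because $\Phi_A$ is an even homomorphism, so the brackets $[b_{i,j}, b_{p,q}]$ are the $\Phi_A$-images of $[a_{i,j}, a_{p,q}]$ and the Manin relation for $A$ transports verbatim. Finally, to see $A^{-1}$ is affine-like Manin of parity $\bs s$: by Lemma \ref{lem:inverse is manin of parity}, $(wB)^{-1} = (1+wA)^{-1}$ is an affine Manin matrix of parity $\bs s$, and since $\Phi_A$ is injective one can pull the Manin relations for the entries of $\Phi_A(A^{-1}) = B^{-1}$ back to relations for the entries of $A^{-1}$; the affine-like structural conditions for $A^{-1}$ (invertibility of all principal submatrices and of their diagonal inverse-entries) follow by applying the already-verified properties to $A^{-1}$ in place of $A$, using that every submatrix of $A^{-1}$ is, up to the block inversion formula \eqref{inv}, again invertible with invertible diagonal.

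The main obstacle I anticipate is bookkeeping rather than conceptual: one must be careful that the homomorphism $\Phi_A$ genuinely intertwines the formation of quasi-minors, i.e.\ that extracting a diagonal entry of an inverse submatrix and inverting it commutes with $\Phi_A$. This rests on the affine-like hypothesis supplying \emph{invertibility of the relevant diagonal entries in $\mathcal A$}, so that the scalar inversions live in $\mathcal A$ and are respected by the homomorphism; without that hypothesis the quasi-minors would not even be defined. The parity and Manin transport, by contrast, are automatic once one records that $\Phi_A$ is even, injective, and a superalgebra map.
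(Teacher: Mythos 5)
Your verification of the first half of the lemma is correct: the computation showing $w\Phi_A(A)=1+wA$ is affine, the derivation of $\Phi_A(A^{-1})=(\Phi_A(A))^{-1}$ from the homomorphism property plus uniqueness of two-sided inverses, the identity $\Phi_A(d_i(A))=d_i(\Phi_A(A))$ (correctly resting on invertibility of the diagonal entries in $\mathcal A$), and the transport of parity and Manin relations to $1+wA$ are all sound. The genuine gap is in the final claim. Being an \emph{affine-like} Manin matrix has, by the paper's definition, two conditions, and for $A^{-1}$ you address only the invertibility conditions and the Manin relations. You never produce the injective superalgebra homomorphism $\Phi_{A^{-1}}:\mathcal A\to\mathcal A((w))$ with $\wt{a}_{i,j}\mapsto \wt{a}_{i,j}+\de_{i,j}w^{-1}$ demanded by the second bullet of the definition, applied to the matrix $A^{-1}$. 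This is not a formality that transports through $\Phi_A$: the map $\Phi_A$ itself does not serve, since $\Phi_A(\wt{a}_{i,j})$ is the $(i,j)$ entry of $(A+w^{-1}I)^{-1}=w(1+wA)^{-1}$, a power series in $w$ with zero constant term and no $w^{-1}$ term, not $\wt{a}_{i,j}+\de_{i,j}w^{-1}$; and no change of variable $w\mapsto f(w)$ repairs this, because solving $f(1+fA)^{-1}=A^{-1}+w^{-1}I$ forces $f=-A^{-1}-wA^{-2}$, which is not a central scalar. So the existence of $\Phi_{A^{-1}}$ requires a separate construction (or must be imposed), and your proof is silent about it. Since the paper itself states this lemma with no written proof, this homomorphism condition is precisely the one point of the ``simple observation'' that is not routine, and it is the point your argument misses.

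A secondary, fixable problem: your justification of the invertibility conditions for $A^{-1}$ is circular as worded, since ``applying the already-verified properties to $A^{-1}$ in place of $A$'' presupposes that $A^{-1}$ is affine-like, which is what is being proved. The correct route is through Schur complements. For a subset $\mathfrak a$ with complement $\mathfrak a^c$, permute indices so that $\mathfrak a$ comes first; then \eqref{inv} (with the roles of the blocks interchanged) gives $((A^{-1})_{\mathfrak a})^{-1}=A_{\mathfrak a}-X(A_{\mathfrak a^c})^{-1}Y$, where $X,Y$ are the off-diagonal blocks of $A$; this has entries in $\mathcal A$ because $A_{\mathfrak a^c}$ is invertible by hypothesis. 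Moreover, for $i\in\mathfrak a$ its $(i,i)$ entry is exactly the $1\times 1$ Schur complement of $A_{\mathfrak a^c}$ inside $A_{\{i\}\cup\mathfrak a^c}$, i.e.\ the inverse of the $(i,i)$ entry of $(A_{\{i\}\cup\mathfrak a^c})^{-1}$, which is invertible by the affine-like hypothesis on $A$. With this repair, and with your (correct) pull-back of the Manin relations through the injective $\Phi_A$ using Lemma \ref{lem:inverse is manin of parity}, everything in your proof is in order except the missing homomorphism $\Phi_{A^{-1}}$, which remains the essential gap.
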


Now we can extend the definition of the Berezinian and its properties to affine-like matrices.

Let $A$ be an affine-like Manin matrix of parity $\bs s$. Define Berezinian $\bers A$ by formula \eqref{equation:ber for any parity}.

\begin{prop}\label{prop:affine-like properties}
Propositions \ref{prop:blockwise change} and \ref{prop:manin matrix row-column change} hold for affine-like Manin matrices of parity $\bs s$.\qed
\end{prop}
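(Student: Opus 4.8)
The plan is to reduce every assertion about affine-like Manin matrices to the already-established results for genuine affine Manin matrices by transporting the statements through the homomorphism $\Phi_A$. The key observation, recorded in the preceding lemma, is that $\Phi_A$ is an injective superalgebra homomorphism carrying $A$ to the affine Manin matrix $\Phi_A(A)$ (of the same parity $\bs s$), and that it intertwines the two operations we care about: $\Phi_A(A^{-1})=(\Phi_A(A))^{-1}$ and $\Phi_A(d_i(A))=d_i(\Phi_A(A))$. Since the Berezinian $\bers A$ is defined by the same formula \eqref{equation:ber for any parity} in terms of the principal quasi-minors $d_i$, we immediately get $\Phi_A(\bers A)=\bers(\Phi_A(A))$. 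The whole strategy is thus: apply $\Phi_A$ to both sides of the identity we wish to prove, recognize the image as an instance of the corresponding affine statement, invoke Proposition \ref{prop:blockwise change} or \ref{prop:manin matrix row-column change} for $\Phi_A(A)$, and then conclude by the injectivity of $\Phi_A$.

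First I would handle the analogue of Proposition \ref{prop:blockwise change}. Fix $r$ and the block decomposition $A=\bpm W & X\\ Y&Z\epm$. Applying $\Phi_A$ entrywise gives the block decomposition of the affine Manin matrix $\Phi_A(A)$ with blocks $\Phi_A(W),\Phi_A(X),\Phi_A(Y),\Phi_A(Z)$. Because $\Phi_A$ is a ring homomorphism commuting with matrix multiplication and inversion, it sends the Schur complement $Z-YW^{-1}X$ to $\Phi_A(Z)-\Phi_A(Y)(\Phi_A(W))^{-1}\Phi_A(X)$, which is precisely the Schur complement appearing in Proposition \ref{prop:blockwise change} applied to $\Phi_A(A)$. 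That proposition then tells us $\Phi_A(W)$ and this Schur complement are affine Manin matrices; pulling back through the injective $\Phi_A$ shows $W$ and $Z-YW^{-1}X$ are affine-like Manin matrices. Applying $\Phi_A$ to \eqref{equation:blockwise ber}, using $\Phi_A(\bers A)=\bers(\Phi_A(A))$ and the same for the two factors, reduces the desired identity to the affine identity already proved, and injectivity of $\Phi_A$ removes $\Phi_A$ from the conclusion.

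For the analogue of Proposition \ref{prop:manin matrix row-column change}, the argument is even cleaner, since $\Phi_A$ commutes with the $\sS_{m+n}$-action: $\Phi_A(\si(A))=\si(\Phi_A(A))$, because permuting rows and columns is just relabeling entries and $\Phi_A$ acts entrywise (and $\Phi_A(\de_{i,j}w^{-1})$ is permutation-invariant on the diagonal). Hence applying $\Phi_A$ to \eqref{equation:manin matrix row-column change} turns it into the corresponding identity for the affine Manin matrix $\Phi_A(A)$, which is Proposition \ref{prop:manin matrix row-column change} itself, and injectivity finishes the proof.

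The one point requiring genuine care — and the place I expect the main obstacle — is verifying that $\Phi_A$ really does intertwine all the constructions on the nose, in particular for the \emph{submatrices} and Schur complements that arise in the block decomposition. The definition of an affine-like matrix demands invertibility of $A_{\mathfrak a}$ with diagonal entries of $A^{-1}_{\mathfrak a}$ invertible for \emph{every} subset $\mathfrak a$, precisely so that all the quasi-minors $d_i$ of all the relevant submatrices, Schur complements, and their permutations are defined in $\mathcal A$ itself rather than only in $\mathcal A((w))$. I would check explicitly that each matrix produced in the block argument (each $W$, each Schur complement, each $\si(A)$, and their sub-blocks) is again affine-like, so that its Berezinian is defined by \eqref{equation:ber for any parity} and the identity $\Phi_A(\bers\,\cdot)=\bers\,\Phi_A(\cdot)$ applies to it; this is what lets the pull-back through injectivity land inside $\mathcal A$. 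Once this bookkeeping is in place, both propositions follow formally, which is why the statement can be left as \qed.
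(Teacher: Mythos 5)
Your overall strategy --- transport everything through the injective homomorphism $\Phi_A$ and pull back by injectivity --- is exactly the mechanism the paper intends (this is why the proposition carries no separate proof and sits immediately after the lemma on $\Phi_A$), but two of your steps fail as written.

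First, Propositions \ref{prop:blockwise change} and \ref{prop:manin matrix row-column change} are statements about \emph{affine} Manin matrices, and $\Phi_A(A)$ is not one: its diagonal entries contain $w^{-1}$, so it does not lie in matrices over $\mathcal{A}[[w]]$ with constant term $I$. The affine Manin matrix attached to $A$ is $w\Phi_A(A)=1+wA$. Hence ``invoke Proposition \ref{prop:blockwise change} or \ref{prop:manin matrix row-column change} for $\Phi_A(A)$'' is not a legal move, and your identity $\Phi_A(\bers A)=\bers\Phi_A(A)$ must be supplemented by the rescaling
$\bers(1+wA)=w^{s_1+\dots+s_{m+n}}\,\Phi_A(\bers A)$,
which follows from $d_i(w\Phi_A(A))=w\,d_i(\Phi_A(A))=w\,\Phi_A(d_i(A))$ with $w$ central. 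The transport then does go through, because the powers of $w$ match on the two sides of each identity: $\sum_i s_i$ is invariant under $\si$ in \eqref{equation:manin matrix row-column change}, and in \eqref{equation:blockwise ber} the exponents $\sum_{i\leq r}s_i$ and $\sum_{i>r}s_i$ of the two factors add up to $\sum_i s_i$ (note also that the Schur complement of $1+wA$ is $w\Phi_A\bigl(Z-YW^{-1}X\bigr)$, not $\Phi_A\bigl(Z-YW^{-1}X\bigr)$). This bookkeeping is not decorative; it is exactly the factor $w^{m-n}$ that the paper tracks in the proof of Proposition \ref{cor:Two Ber coincides as Bethe algebra}.

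Second, and more seriously, affine-likeness of $W$ and of $Z-YW^{-1}X$ cannot be obtained by ``pulling back through the injective $\Phi_A$''. Injectivity pulls back \emph{identities} (the Manin relations descend this way, which is fine), but affine-likeness consists of \emph{existence} statements --- two-sided inverses in $\mathcal{A}$ of all submatrices, invertibility of the diagonal entries of those inverses, and existence of the embedding $\Phi$ --- and existence does not descend along an injection: an element can become invertible in $\mathcal{A}((w))$ without being invertible in $\mathcal{A}$ (e.g.\ $x\mapsto x+w^{-1}$ for $\mathcal A=\C[x]$). You flag this at the end as bookkeeping you ``would check'', but it is the one place where an actual argument is required, and it is needed even for the right-hand side of \eqref{equation:blockwise ber} to be defined in $\mathcal{A}$. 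The argument available from the paper's setup: $W$ is a principal submatrix of $A$, and a submatrix of an affine-like Manin matrix is affine-like Manin with the same map $\Phi_A$, since its defining conditions are a subset of those for $A$; by the Gauss decomposition \eqref{inv}, $\bigl(Z-YW^{-1}X\bigr)^{-1}$ is the lower-right block of $A^{-1}$, which is affine-like Manin by the unlabeled lemma preceding the proposition; therefore $Z-YW^{-1}X$, being the inverse of an affine-like Manin matrix, is affine-like Manin by that same lemma. With these two repairs your transport argument is complete. Alternatively, one can skip the transport altogether: the paper's proofs of Propositions \ref{prop:blockwise change} and \ref{prop:manin matrix row-column change} use only the existence of the relevant inverses and Lemma \ref{lem:inverse is manin of parity}, whose affine-like analogue is supplied by that same unlabeled lemma, so those proofs go through verbatim for affine-like Manin matrices.
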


\section{Bethe algebra $\BB_{m|n}(\bs\La)$}\label{sec:gl_{m|n} bethe algebra}
In this section we discuss Bethe subalgebras $\BB_{m|n}(\bs\La)\subset U\gl_{m|n}[t]$. The Bethe subalgebras  $\BB_{m|n}(\bs\La)$ are commutative and depend on parameters $\bs\La=(\La_1,\dots,\La_{m+n})\in\C^{m+n}$. 

\subsection{Algebra of pseudodifferential operators}
Let $\mathcal{A}$ be a differential superalgebra with an even derivation $\dd:\mathcal A \to \mathcal A$.  For $r\in\Z_{\geq 0}$, denote  the $r$-th derivative of $a\in\mathcal{A}$ by $a_{(r)}$.

Let $\mathcal{A}((\dd^{-1}))$ be the {\it algebra of pseudodifferential operators}. The elements of $\mathcal{A}((\dd^{-1}))$ are Laurent series in $\dd^{-1}$ with coefficients in $\mathcal A$, and the product follows from the relations
\[
\dd\dd^{-1}=\dd^{-1}\dd=1,\quad\dd^{r}a=\sum_{s=0}^{\infty}\binom{r}{s}a_{(s)}\dd^{r-s},\ r\in\Z,\  a\in\mathcal{A},
\] where \[\binom{r}{s}=\frac{r(r-1)\dots (r-s+1)}{s!}.\]

Let $\mathcal{A}[\dd]\subset \mathcal{A}((\dd^{-1})) $ be the {\it subalgebra of differential operators}, 
\[\mathcal{A}[\dd]=\{\sum_{r=0}^{M}a_r\dd^{r},M\in\Z_{\geq0},a_r\in\mathcal{A}\}.\]

Consider a linear map $\Phi: \mathcal{A}((\dd^{-1}))\rightarrow\mathcal{A}[\dd]((w)),$
\be\label{equation:Phi map}
\Phi:\sum_{r=-\infty}^{N}a_r\dd^{r} \mapsto\sum_{r=-\infty}^{N}a_r(w^{-1}+\dd)^{r},
\ee 
where the right hand side is expanded by the rule $(w^{-1}+\dd)^{r}=\sum_{s=0}^{\infty}\binom{r}{s}\dd^{s}w^{-r+s}$.

\begin{lem}\label{lem:pseudo to differ}
	The map $\Phi$ is an injective homomorphism of superalgebras.
\end{lem}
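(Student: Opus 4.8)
The plan is to recognize $\Phi$ as the algebra substitution $\dd\mapsto w^{-1}+\dd$ that fixes $\mathcal{A}$ elementwise, and to verify the three requirements — well-definedness, multiplicativity, and injectivity — essentially by tracking orders in $w$. Throughout I write $D=w^{-1}+\dd\in\mathcal{A}[\dd]((w))$, so that $\Phi\big(\sum_r a_r\dd^r\big)=\sum_r a_r D^r$, where $D^r=w^{-r}(1+w\dd)^r=\sum_{s\ge0}\binom{r}{s}\dd^s w^{-r+s}$ has lowest $w$-degree $-r$.

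First I would check that $\Phi$ lands in $\mathcal{A}[\dd]((w))$. Given $P=\sum_{r\le N}a_r\dd^r$, the coefficient of $w^t$ in $\Phi(P)$ collects the terms with $-r+s=t$, $s\ge0$, $r\le N$; this forces $-t\le r\le N$, so only finitely many $(r,s)$ contribute and the coefficient is a genuine element of $\mathcal{A}[\dd]$. Since every $D^r$ has $w$-degree $\ge -N$, the whole series is bounded below in $w$, hence lies in $\mathcal{A}[\dd]((w))$. That $\Phi$ preserves parity and sends $1\mapsto 1$ is immediate, because $\dd$ and $w$ are even.

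The heart of the argument is multiplicativity, which I would reduce to the single identity
\begin{equation*}
D^r b=\sum_{s\ge0}\binom{r}{s}b_{(s)}\,D^{r-s},\qquad b\in\mathcal{A},\ r\in\Z.\tag{$\star$}
\end{equation*}
Granting $(\star)$, for $P=\sum a_r\dd^r$ and $Q=\sum b_{r'}\dd^{r'}$ one computes
\[
\Phi(P)\Phi(Q)=\sum_{r,r'}a_r D^r b_{r'}D^{r'}=\sum_{r,r',s}\binom{r}{s}a_r (b_{r'})_{(s)}\,D^{r+r'-s}=\Phi(PQ),
\]
where the second equality is $(\star)$ and the third is the multiplication rule $\dd^r b_{r'}=\sum_s\binom{r}{s}(b_{r'})_{(s)}\dd^{r-s}$ defining the product in $\mathcal{A}((\dd^{-1}))$; each rearrangement is legitimate because, as in the well-definedness step, every coefficient of a fixed power of $w$ is a finite sum. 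To prove $(\star)$ itself I would induct on $r$: the case $r=1$ is the relation $Db=bD+b_{(1)}$, which holds since $w^{-1}$ is central and $\dd b=b\dd+b_{(1)}$, and upward induction on $r\ge0$ then follows from Pascal's rule. For the base case $r=-1$ I would left-multiply the conjectured right-hand side by $D$, apply the $r=1$ relation term by term, and observe that the resulting sum telescopes to $b$; invertibility of $D$ then yields $(\star)$ for $r=-1$, and downward induction (again via a binomial summation identity) gives all $r<0$.

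Finally, injectivity follows from the same order-in-$w$ bookkeeping: if $M$ is the largest index with $a_M\neq0$ in $P=\sum a_r\dd^r$, then the coefficient of $w^{-M}$ in $\Phi(P)$ receives a contribution only from $r=M$, $s=0$, and equals $a_M\neq0$, so $\Phi(P)\neq0$. The main obstacle I anticipate is $(\star)$ for negative exponents $r$, where one cannot simply induct upward and must instead bootstrap from $r=-1$ using the invertibility of $D$ together with an alternating binomial-sum identity; the accompanying convergence bookkeeping in $\mathcal{A}[\dd]((w))$, while routine, must be carried out carefully to justify interchanging the summations above.
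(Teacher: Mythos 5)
Your proof is correct, and its skeleton matches the paper's: well-definedness and injectivity by tracking extremal powers of $w$, and multiplicativity reduced to commuting an element of $\mathcal{A}$ past $\Phi(\dd^r)$. The genuine difference is how that key identity is verified. The paper proves $\Phi(\dd^r a)=\Phi(\dd^r)\Phi(a)$ directly and uniformly for all $r\in\Z$: it expands both sides as double sums in $\mathcal{A}[\dd]((w))$, where only nonnegative powers of $\dd$ occur so that $\dd^s a=\sum_{t=0}^{s}\binom{s}{t}a_{(t)}\dd^{s-t}$ is a finite sum, and matches coefficients after a change of summation indices, which amounts to the identity $\binom{r}{s}\binom{r-s}{t}=\binom{r}{s+t}\binom{s+t}{s}$ valid for every integer $r$. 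Your route instead establishes the equivalent identity $(\star)$ by induction: Pascal's rule upward for $r\ge 0$, and for $r<0$ a bootstrap from $r=-1$ using invertibility of $D$, telescoping, and the alternating-sum identity $\sum_{s=0}^{u}(-1)^{u-s}\binom{r}{s}=\binom{r-1}{u}$. Both arguments are sound; the paper's computation is shorter because it never splits on the sign of $r$ nor invokes $D^{-1}$, while your version makes explicit the structural point that $D=w^{-1}+\dd$ satisfies in $\mathcal{A}[\dd]((w))$ exactly the commutation law defining the product in $\mathcal{A}((\dd^{-1}))$, which renders multiplicativity transparent.
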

\begin{proof}
	For any $r$, the coefficient of $w^{r}$ in the right hand side of \eqref{equation:Phi map} is a summation of finitely many terms.
	
	The coefficient of $w^{-N}$ in $\Phi(\sum_{r=-\infty}^{N}a_r\dd^{r})$ is $a_N$. Therefore, $\Phi$ is injective.
	
	For any $a\in\mathcal{A}$, we have 
	\[
	\Phi(\dd^{r}a)=\Phi\left(\sum_{s=0}^{\infty}\binom{r}{s}a_{(s)}\dd^{r-s}\right)=\sum_{s=0}^{\infty}\sum_{t=0}^{\infty}\binom{r}{s}\binom{r-s}{t}a_{(s)}\dd^{t}w^{-r+s+t}.
	\] 
	Then, changing the summation indices we obtain
	\[
	\Phi(\dd^r)\Phi(a)=\Phi(\dd^{r})a=\sum_{s=0}^{\infty}\binom{r}{s}\dd^{s}w^{-r+s}a=\sum_{s=0}^{\infty}\sum_{t=0}^{s}\binom{r}{s}\binom{s}{t}a_{(t)}\dd^{s-t}u^{-r+s}=\Phi(\dd^ra).
	\] 
	Therefore, the map $\Phi$ is a homomorphism of superalgebras.
\end{proof}  

\subsection{Bethe subalgebra}\label{subsec:balgebra} Let
$$ 
\mathcal A_v^{m|n} = U\gl_{m|n}[t]((v^{-1}))=\Big\{\sum_{r=-\infty}^N g_r v^r, \ N\in\Z, \ g_r\in U\gl_{m|n}[t]\  \Big\}
$$
be the superalgebra of Laurent series in $v^{-1}$ with coefficients in $U\gl_{m|n}[t]$.
The algebra $\mathcal A_v^{m|n}$ is a differential superalgebra with derivation $\dv$.

Let $\bs\La=(\La_1,\dots,\La_{m+n})$ be a sequence of complex numbers. Consider the matrix $B(\bs\La)$ with entries in the algebra of pseudodifferential operators $\mathcal A_v^{m|n} ((\dv^{-1}))$ given by
\be\label{equation: universal oper}
B(\bs\La)=\Big(\de_{i,j}(\dv-\La_i)-(-1)^{\bar i}e_{i,j}(v)\Big)_{i,j=1,\dots,m+n}.
\ee  
The following lemma is checked by a straightforward computation.
\begin{lem}
The matrix $B(\bs\La)$ is an affine-like Manin matrix of standard parity with the map $\Phi_{B(\bs\La)}=\Phi$, see \eqref{equation:Phi map}. \qed
\end{lem}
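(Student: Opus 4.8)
The plan is to verify the two defining conditions of an affine-like Manin matrix of standard parity directly for $B(\bs\La)$, and to confirm that the associated homomorphism is precisely $\Phi$ from \eqref{equation:Phi map}. First I would check the parity condition: since $e_{i,j}(v)$ has parity $\bar i+\bar j$ and the scalar differential part $\de_{i,j}(\dv-\La_i)$ contributes only when $i=j$ (where it is even), each entry of $B(\bs\La)$ has parity $\bar i+\bar j=\bar i^{\bs s_0}+\bar j^{\bs s_0}$, so $B(\bs\La)$ is of standard parity. Next I would identify the homomorphism $\Phi_{B(\bs\La)}$. Applying $\Phi$ of \eqref{equation:Phi map} to the entry $\de_{i,j}(\dv-\La_i)-(-1)^{\bar i}e_{i,j}(v)$ replaces $\dv$ by $\dv+w^{-1}$, so $\Phi(B(\bs\La)_{i,j})=B(\bs\La)_{i,j}+\de_{i,j}w^{-1}$; thus $\Phi$ sends each entry to itself plus $\de_{i,j}w^{-1}$, which is exactly the required map, and it is an injective superalgebra homomorphism by Lemma \ref{lem:pseudo to differ}. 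For the invertibility condition, I would use the affine picture: by the lemma on affine-like matrices, it suffices that $1+wB(\bs\La)$ (equivalently $w\Phi(B(\bs\La))$) be an affine matrix, i.e.\ have constant term the identity. Here the ground algebra is $\mathcal A_v^{m|n}[\dv]((w))$ and the diagonal leading term $\de_{i,j}\dv$ becomes, after $\Phi$, the unit $\de_{i,j}(w^{-1}+\dv)\cdot w$ whose $w^0$-coefficient is $\de_{i,j}$, so $w\Phi(B(\bs\La))$ is indeed affine and every principal submatrix has a two-sided inverse with invertible diagonal quasi-minors.

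The substantive step is verifying the Manin relation
\[
[b_{i,j},b_{p,q}]=(-1)^{\bar i\bar j+\bar i\bar p+\bar j\bar p}[b_{p,j},b_{i,q}],\qquad b_{i,j}=\de_{i,j}(\dv-\La_i)-(-1)^{\bar i}e_{i,j}(v),
\]
as an identity in $\mathcal A_v^{m|n}((\dv^{-1}))$. I would expand both sides using the current-algebra bracket \eqref{equation:current algebra commutator} together with the commutation rules of $\dv$ with the coefficients, and check that the contributions coincide. The cross terms between the scalar part $\de_{i,j}(\dv-\La_i)$ and the current part produce commutators of $\dv$ with the power series $e_{p,q}(v)$; these are controlled by $[\dv,e_{p,q}(v)]=\dd_v e_{p,q}(v)$ and can be organized so that the sign factor $(-1)^{\bar i\bar j+\bar i\bar p+\bar j\bar p}$ on the right matches the left after using \eqref{equation:current algebra commutator} to rewrite $(u-v)[e_{i,j}(u),e_{p,q}(v)]$. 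The purely current-part contribution reduces to the Manin relation for the matrix $\big(-(-1)^{\bar i}e_{i,j}(v)\big)$, which is the standard fact underlying the $\gl_{m|n}$ Gaudin construction of \cite{MR}, \cite{MM}.

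The main obstacle is bookkeeping the signs and the interaction of the derivation $\dv$ with the power series $e_{i,j}(v)$ in the Manin relation. The delicate point is that $\dv$ does not commute with the entries, so the naive cancellation of diagonal terms against off-diagonal terms requires carefully tracking each $[\dv,\cdot]$ correction and confirming that the Koszul sign $(-1)^{\bar i}$ attached to $e_{i,j}(v)$, combined with the parity sign on the right-hand side, leaves the identity invariant. Because this is a finite and explicit computation — checking the relation for the finitely many parity types of the index quadruple $(i,j,p,q)$ — it is routine in the sense claimed by the lemma, and I would present it as a direct verification rather than invoke additional machinery.
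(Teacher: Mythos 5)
Your outline of the parity check and of the identification $\Phi_{B(\bs\La)}=\Phi$ is fine, and a direct verification is indeed all the paper intends (it offers nothing beyond ``straightforward computation''). But two of your steps are genuinely flawed. First, the invertibility argument runs the relevant lemma backwards. The paper's lemma on affine-like matrices says that if $A$ is affine-like then $w\Phi_A(A)=1+wA$ is affine; you invoke the converse, which is false: $1+wA$ is invertible over $\mathcal{A}[[w]]$ for \emph{every} matrix $A$ (geometric series in $w$), so affineness of $1+wB(\bs\La)$ carries no information about invertibility of the submatrices $B_{\mathfrak a}$ in $\mathcal A_v^{m|n}((\dv^{-1}))$ itself; and since $\Phi$ is injective but not surjective, an inverse of $\Phi(B_{\mathfrak a})$ with entries in $\mathcal A_v^{m|n}[\dv]((w))$ does not pull back to an inverse of $B_{\mathfrak a}$. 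What is needed (and is easy) is a direct argument in the pseudodifferential algebra: write $B_{\mathfrak a}=D_{\mathfrak a}(1-D_{\mathfrak a}^{-1}E_{\mathfrak a})$ with $D_{\mathfrak a}=\diag(\dv-\La_i)_{i\in\mathfrak a}$ and $E_{\mathfrak a}=\big((-1)^{\bar i}e_{i,j}(v)\big)_{i,j\in\mathfrak a}$; note that $(\dv-\La_i)^{-1}=\sum_{r\geq 0}\La_i^r\dv^{-r-1}$ exists, that the entries of $D_{\mathfrak a}^{-1}E_{\mathfrak a}$ have order at most $-1$ in $\dv^{-1}$, so $\sum_{r\geq 0}(D_{\mathfrak a}^{-1}E_{\mathfrak a})^r$ converges coefficientwise, and that the diagonal entries of the resulting inverse have leading term $\dv^{-1}$ and hence are invertible by the same kind of series.

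Second, in the Manin-relation step your organizing claim is false: the matrix $\big(-(-1)^{\bar i}e_{i,j}(v)\big)$ alone is \emph{not} a Manin matrix, so the ``purely current-part contribution'' cannot be disposed of by citing a standard fact from \cite{MR}. Concretely, already for two even indices one has $[e_{1,1}(v),e_{2,2}(v)]=-[e_{1,1},e_{2,2}]'(v)=0$, while $[e_{2,1}(v),e_{1,2}(v)]=-[e_{2,1},e_{1,2}]'(v)=e_{1,1}'(v)-e_{2,2}'(v)\neq 0$, violating the Manin relation for the current matrix. The correct bookkeeping necessarily mixes the two kinds of terms: using $[\dv,e_{p,q}(v)]=e_{p,q}'(v)$ and $[e_{i,j}(v),e_{p,q}(v)]=-[e_{i,j},e_{p,q}]'(v)$ (the $u\to v$ limit of \eqref{equation:current algebra commutator}), one finds that the cross terms ($\dv$ against currents) on the left-hand side of the Manin relation match the current-current terms on the right-hand side, and vice versa; it is precisely the presence of $\dv$ on the diagonal that makes $B(\bs\La)$ Manin. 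With this corrected pairing the computation does close, and the remainder of your outline is sound.
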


Consider the expansion of the Berezinian of the affine Manin matrix $w\Phi(B(\bs\La))=1+wB(\bs\La)$: 
\be\label{equation:Molev expansion}
\ber(1+wB(\bs\La))=\sum_{r=0}^{\infty}\sum_{s=0}^{r}B_{r,s}^{\bs \La}(v)\dv^{r-s}w^r,
\ee
where $B_{r,s}^{\bs \La}(v)\in \mathcal A_v^{m|n}$.
The following fundamental result is known.
\begin{thm}\cite{MR}\label{thm:bethe algebra}
The series $B_{r,s}^{\bs \La}(v)$ pairwise commute, $[B_{r_1,s_1}^{\bs \La}(v_1),B_{r_2,s_2}^{\bs \La}(v_2)]=0$, for all $r_1,s_1,r_2,s_2$.

The series $B_{r,s}^{\bs \La}(v)$ commute with the Cartan subalgebra ${\mathfrak h}\subset U\gl_{m|n}$,  
$[B_{r,s}^{\bs \La}(v),e_{i,i}]=0$, for all $r,s,i$.
\qed
\end{thm}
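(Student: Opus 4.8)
The plan is to prove the two assertions by different means: the commutation with the Cartan subalgebra by a grading (weight) argument, and the pairwise commutativity by combining the current-algebra exchange relation \eqref{equation:current algebra commutator} with the Manin structure of $B(\bs\La)$ and its quasideterminant factorization.

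First I would settle the Cartan statement. In position $(i,j)$ the entry $B(\bs\La)_{i,j}=\de_{i,j}(\dv-\La_i)-(-1)^{\bar i}e_{i,j}(v)$ carries $\mathfrak h$-weight $\epsilon_i-\epsilon_j$, since the diagonal terms $\dv-\La_i$ are weight zero and $e_{i,j}(v)$ has weight $\epsilon_i-\epsilon_j$. Writing $(1+wB(\bs\La))^{-1}=\sum_{k\ge 0}(-wB(\bs\La))^k$ and telescoping weights along each matrix product shows that the $(p,q)$ entry $\wt a_{p,q}$ of the inverse again has weight $\epsilon_p-\epsilon_q$. Hence in the standard-parity Berezinian \eqref{equation:ber} every monomial $a_{\si(1),1}\cdots a_{\si(m),m}$ of the first factor and $\wt a_{m+1,m+\tau(1)}\cdots \wt a_{m+n,m+\tau(n)}$ of the second has total weight $\sum_{i}(\epsilon_{\si(i)}-\epsilon_i)+\sum_{j}(\epsilon_{m+j}-\epsilon_{m+\tau(j)})=0$. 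Therefore each coefficient $B_{r,s}^{\bs\La}(v)$ is $\mathfrak h$-weight zero, which is exactly $[B_{r,s}^{\bs\La}(v),e_{i,i}]=0$ for all $i$.

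For the pairwise commutativity I would first rewrite \eqref{equation:current algebra commutator} as the (super) classical $r$-matrix relation satisfied by the Gaudin Lax matrix underlying $B(\bs\La)$ at two independent spectral parameters $v_1,v_2$; this is the structural input governing how the two copies of $B(\bs\La)$, at $v_1$ and $v_2$, fail to commute. I would then factor the Berezinian through its principal quasi-minors via Theorem~\ref{thm:MR quasideterminant}, so that $\ber(1+wB(\bs\La))=d_1\cdots d_m\, d_{m+1}^{-1}\cdots d_{m+n}^{-1}$, and use the Gauss/block decomposition of Proposition~\ref{prop:blockwise change} to reduce statements about products of coefficients to relations among the quasi-minors $d_i$ at $v_1$ and $v_2$. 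Because the quasi-minors are built from submatrices that are themselves affine Manin (Lemma~\ref{lem:inverse is manin of parity} and Proposition~\ref{prop:blockwise change}), the Manin relations propagate to all the intermediate objects and control the commutators coming from $\dv$ acting across the two spectral parameters.

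The main obstacle is the cancellation of the quantum corrections in $B_{r_1,s_1}^{\bs\La}(v_1)B_{r_2,s_2}^{\bs\La}(v_2)-B_{r_2,s_2}^{\bs\La}(v_2)B_{r_1,s_1}^{\bs\La}(v_1)$: the leading ``classical'' contribution vanishes by the $r$-matrix relation, but the non-commutativity produced by the differential operator $\dv$, dressed by the fermionic signs $(-1)^{\bar i}$, generates a large collection of correction terms that must reorganize and cancel in pairs. This is the same mechanism the authors flag for their Capelli-type identities, where ``the quantum corrections created by non-commutativity all cancel out.'' I expect the cleanest route is to track these corrections through the quasideterminant factorization, reducing the computation to the two-parameter exchange identity for $2\times2$ blocks (as in the proof of Proposition~\ref{prop:manin matrix row-column change}) and then to the standard-parity calculation of \cite{MR}.
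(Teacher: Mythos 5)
Your weight argument for the second assertion is correct and complete: each entry of $1+wB(\bs\La)$ has adjoint $\mathfrak h$-weight $\epsilon_i-\epsilon_j$, the geometric series for the inverse telescopes to give weight $\epsilon_p-\epsilon_q$ for its entries, and every monomial in \eqref{equation:ber} then has total weight zero, so all coefficients $B^{\bs\La}_{r,s}(v)$ commute with each $e_{i,i}$. Be aware, though, that the paper itself gives no argument for this theorem at all: it is quoted from \cite{MR} with an immediate \textit{qed}, so there is no internal proof to match; your Cartan computation is a genuine (and standard) self-contained argument for that half.

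The pairwise commutativity, which is the substantive half of the theorem, is where your proposal has a real gap. Every tool you invoke from Section \ref{sec:Ber} --- the Manin property of $B(\bs\La)$, Lemma \ref{lem:inverse is manin of parity}, the quasideterminant factorization of Theorem \ref{thm:MR quasideterminant}, and the block decomposition of Proposition \ref{prop:blockwise change} --- is an identity for a \emph{single} matrix at a \emph{single} spectral parameter; none of them produces a relation between coefficients extracted at two independent parameters $v_1,v_2$, which is exactly what $[B^{\bs\La}_{r_1,s_1}(v_1),B^{\bs\La}_{r_2,s_2}(v_2)]=0$ requires. The $2\times2$ reduction you borrow from Proposition \ref{prop:manin matrix row-column change} proves a different statement (invariance of the Berezinian under simultaneous permutation of rows and columns of one matrix), so it cannot carry the two-parameter cancellation you need. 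You correctly identify the $r$-matrix relation \eqref{equation:current algebra commutator} as the structural input (for matrices of the form $\partial-L(v)$ it is essentially equivalent to the Manin property, cf. \cite{CFR}), but you never convert it into an actual cancellation argument: the passage from the $r$-matrix relation to commutativity of the Berezinian (or column-determinant) coefficients is the hard content of Talalaev's theorem \cite{T} and of \cite{MR}, where it is obtained from the Feigin--Frenkel center at the critical level (equivalently, by degeneration of the Yangian Bethe subalgebra), not by reorganizing normal-ordering corrections inside the quasi-minor factorization. Finally, your closing step --- reducing ``to the standard-parity calculation of \cite{MR}'' --- is circular here, since the standard-parity commutativity \emph{is} the theorem being proved. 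To close the gap you would have to either reproduce the critical-level/MacMahon machinery of \cite{MR} or give the Yangian-limit argument; neither follows from the lemmas of Section \ref{sec:Ber}.
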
 
We call the commutative subalgebra generated by coefficients of series $B_{r,s}^{\bs \La}(v)$, $r,s\in\Z_{\geq 0}$, $s\leq r$, the {\it Bethe subalgebra of $U\gl_{m|n}[t]$} and denote it by $\BB_{m|n}(\bs\La)$.

\medskip

Alternatively, we can expand $\ber B(\bs \La)$ directly
\be\label{equation:pseudo expansion}
\ber B(\bs\La)=\sum_{r=-\infty}^{m-n}B_{r}^{\bs \La}(v)\dv^{r},
\ee 
where $B_{r}^{\bs \La}(v)\in \mathcal A_v^{m|n}$.
\begin{prop}\label{cor:Two Ber coincides as Bethe algebra}
	The coefficients of the series $B^{\bs\La}_{r}(v)$, $r\in\Z_{\leq m-n}$ generate the Bethe algebra $\BB_{m|n}(\bs\La)$. 
\end{prop}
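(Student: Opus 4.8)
The plan is to relate the two expansions \eqref{equation:Molev expansion} and \eqref{equation:pseudo expansion} directly through the homomorphism $\Phi$ of Lemma \ref{lem:pseudo to differ}, and then to read off an explicit proportionality between the coefficient series $B_{r,s}^{\bs\La}(v)$ and $B_{r}^{\bs\La}(v)$. Throughout, write $B=B(\bs\La)$, and recall that $B$ is an affine-like Manin matrix of standard parity with $\Phi_B=\Phi$, so that $1+wB=w\,\Phi(B)$ is an affine Manin matrix of standard parity.

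First I would establish the key identity $\ber(1+wB)=w^{\,m-n}\,\Phi(\ber B)$. By definition \eqref{equation:ber for any parity} applied to the affine-like matrix $B$, and by Theorem \ref{thm:MR quasideterminant} applied to the affine matrix $1+wB$, both Berezinians factor through the principal quasi-minors as $d_1\cdots d_m\,d_{m+1}^{-1}\cdots d_{m+n}^{-1}$. The elementary scaling property $d_i(wM)=w\,d_i(M)$ of quasi-minors, combined with the relation $\Phi_B(d_i(B))=d_i(\Phi_B(B))$ from the affine-like matrix lemma of Section \ref{sec:Ber}, gives $d_i(1+wB)=w\,\Phi(d_i(B))$ for every $i$. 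Since $w$ is central and $\Phi$ is a homomorphism preserving inverses, collecting the $m$ plain factors and the $n$ inverse factors pulls out $w^{m}\cdot w^{-n}=w^{\,m-n}$ and leaves $\Phi(\ber B)$ inside, which is the claimed identity.

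Next I would apply $\Phi$ to \eqref{equation:pseudo expansion}, expanding $\Phi(\dv^{k})=(w^{-1}+\dv)^{k}=\sum_{s\geq 0}\binom{k}{s}\dv^{s}w^{-k+s}$, and multiply by $w^{m-n}$ to obtain
\[
\ber(1+wB)=\sum_{k\leq m-n}\ \sum_{s\geq 0}\binom{k}{s}\,B_{k}^{\bs\La}(v)\,\dv^{s}\,w^{\,m-n-k+s}.
\]
Matching the coefficient of $w^{r}\dv^{\,r-s}$ against the Molev expansion \eqref{equation:Molev expansion} forces $k=m-n-s$ and produces
\[
B_{r,s}^{\bs\La}(v)=\binom{m-n-s}{\,r-s\,}\,B_{m-n-s}^{\bs\La}(v).
\]
Here I would use the injectivity of $\Phi$ (Lemma \ref{lem:pseudo to differ}) to justify comparing coefficients.

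Finally, the two inclusions are immediate. Specializing $s=r$ gives $B_{r,r}^{\bs\La}(v)=B_{m-n-r}^{\bs\La}(v)$; equivalently $B_{k}^{\bs\La}(v)=B_{m-n-k,\,m-n-k}^{\bs\La}(v)$ for $k\leq m-n$, so each $B_{k}^{\bs\La}(v)$ coincides with one of the defining generators of $\BB_{m|n}(\bs\La)$ and hence its coefficients lie in $\BB_{m|n}(\bs\La)$. Conversely, each $B_{r,s}^{\bs\La}(v)$ is a scalar multiple of $B_{m-n-s}^{\bs\La}(v)$, so every defining generator of $\BB_{m|n}(\bs\La)$ lies in the subalgebra generated by the coefficients of the $B_{k}^{\bs\La}(v)$, $k\leq m-n$. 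Therefore the two families generate the same algebra. The hard part will be the key identity $\ber(1+wB)=w^{\,m-n}\,\Phi(\ber B)$; its delicate points are the compatibility of the quasi-minor scaling with $\Phi$ and the fact that, being an injective homomorphism of superalgebras, $\Phi$ commutes with inverting the quasi-minors $d_i$. Once this identity is secured, the remaining steps are routine bookkeeping with binomial coefficients.
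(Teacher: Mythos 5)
Your proof is correct and takes essentially the same route as the paper: the paper's proof consists precisely of the identity $w^{m-n}\Phi(\ber B(\bs\La))=w^{m-n}\ber\Phi(B(\bs\La))=\ber(1+wB(\bs\La))$, justified by the homomorphism property of $\Phi$ together with $\Phi(a)=a$ for $a\in\mathcal A_v^{m|n}$, after which it states ``the proposition follows.'' Your additional details --- the quasi-minor scaling argument for the key identity and the explicit coefficient relation $B_{r,s}^{\bs\La}(v)=\binom{m-n-s}{r-s}B_{m-n-s}^{\bs\La}(v)$ with its two resulting inclusions --- are exactly the bookkeeping the paper leaves implicit.
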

\begin{proof}
	We have 
	$$
	w^{m-n}\Phi(\ber B(\bs\La))=w^{m-n}\ber \Phi (B(\bs\La))=\ber(1+wB(\bs\La)),
	$$
	since $\Phi$ is a homomorphism of superalgebras by Lemma \ref{lem:pseudo to differ}. Moreover, $\Phi(a)=a$, for $a\in \mathcal A_v^{m|n}$. The proposition follows.
\end{proof}

\section{Duality between $\BB_{m|n}$ and $\BB_{k}$}\label{sec:duality}
In this section we show the duality between $\BB_{m|n}(\bs\La)$ and $\BB_{k}(\bs z)$ acting in the space of supersymmetric polynomials. The duality in the case of $n=0$ is given in \cite{MTV5}.

\subsection{The duality between $\gl_{m|n}$ and $\gl_{k}$}\label{classical duality}
We start with the standard duality between $\gl_{m|n}$ and $\gl_{k}$.

Let $\mathcal{D}$ be the superalgebra generated by $x_{i,a},\dd_{i,a},i=1,\dots,m+n,a=1,\dots,k,$ with parity given by $\bar{x}_{i,a}=\bar{\dd}_{i,a}=\bar{i}$ and the relations given by supercommutators 
$$
[x_{i,a},x_{j,b}]=[\dd_{i,a},\dd_{j,b}]=0,\qquad [\dd_{i,a},x_{j,b}]=\de_{i,j}\de_{a,b},\ \mbox{for all}\ i,j,a,b.
$$ 

Let $V\subset \mathcal D$ be the subalgebra generated by $x_{i,a}$, $i=1,\dots,m+n$, $a=1,\dots,k$. Then 
$$
V=\C[x_{i,a}, \ i=1,\dots,m,\ a=1,\dots,k ]\otimes\La(x_{j,a},\ j=m+1,\dots,m+n, \ a=1,\dots, k)
$$ 
is the product of a polynomial algebra and a Grassmann algebra. We call $V$ the {\it space of supersymmetric polynomials} or {\it bosonic-fermionic space}. The algebra $\mathcal D$ acts on $V$ in the obvious way.

We have a homomorphism of superalgebras $\pi_{m|n}:\gl_{m|n} \to \mathcal D$ given by
$$
\pi_{m|n} (e^{[m|n]}_{i,j})=\sum_{a=1}^{k}x_{i,a}\dd_{j,a},\ i,j=1,\dots,m+n,
$$
where we write the suffix  in $e^{[m|n]}_{i,j}$ to indicate that these are elements of $\gl_{m|n}$. 
In particular, $\gl_{m|n}$ acts on $V$.

For $a\in\{1,\dots,k\}$, let $V^{(a)}_{m|n}\subset V$ be the subalgebra generated by $x_{1,a},\dots,x_{m+n,a}$. Then we have isomorphisms of $\gl_{m|n}$-modules:
\[
V^{(a)}_{m|n}=\bigoplus_{d=0}^{\infty} L^{(a)}_{m|n}(d\epsilon_1), \qquad V=\bigotimes_{a=1}^k V^{(a)}_{m|n},
\] 
where $L^{(a)}_{m|n}(d\epsilon_1)$ is the the irreducible $\gl_{m|n}$-module with highest weight $(d,0,\dots,0)$ and highest weight vector $x_{1,a}^d$. The submodule $L^{(a)}_{m|n}(d\epsilon_1)$
is spanned by all monomials of total degree $d$ in $V^{(a)}_{m|n}$.

\medskip

We also have the homomorphism of superalgebras $\pi_k:\gl_k \to \mathcal D$ given by
\[
\pi_k(e^{[k]}_{a,b})=\sum_{i=1}^{m+n}x_{i,a}\dd_{i,b},\qquad a,b=1,\dots,k. 
\]
In particular, $\gl_k$ also acts on $V$.

For $i\in\{1,\dots,m+n\}$, let $V^{(i)}_k\subset V$ be the subalgebra generated by $x_{i,1},\dots,x_{i,k}$. If $i\leq m$, the space $V^{(i)}_k$ is the polynomial ring of $k$ variables, otherwise the space $V^{(i)}_k$ is the Grassmann algebra of $k$ variables. Then we have isomorphisms of $\gl_k$-modules:
\begin{align*}
V^{(i)}_{k}=\bigoplus_{d=0}^{\infty} L^{(i)}_k(d\epsilon_1), \  i\leq m,\qquad
V^{(i)}_{k}=\bigoplus_{a=0}^{k} L^{(i)}_k(\omega_a), \  i> m, \qquad
 V=\bigotimes_{i=1}^{m+n} V^{(i)}_k.
\end{align*}
Here, $L^{(i)}_k(d\epsilon_1)$, $i\leq m$, is the irreducible $\gl_k$-module with highest weight $(d,0,\dots,0)$ and highest weight vector $x_{i,1}^d$. 
The submodule $L^{(i)}_k(d\epsilon_1)$ is spanned by all monomials of total degree $d$ in $V^{(i)}_k$.
The module $L^{(i)}_k(\omega_a)$, $i>m$, is the irreducible $\gl_k$-module with highest weight $(\underbrace{1,\dots,1}_a,0\dots,0)$ and highest weight vector $x_{i,1}\dots x_{i,a}$. This submodule  is spanned by all monomials of total degree $a$ in $V^{(i)}_k$. 

In particular we have the canonical identification of weight spaces:
\begin{align}
&\left(L_{m|n}^{(1)}(\la_1\epsilon_1)\otimes \dots \otimes L_{m|n}^{(k)}(\la_k\epsilon_1)\right)[(\mu_1,\dots,\mu_{m+n})] \notag\\
&=\left(L_k^{(1)}(\mu_1\epsilon_1)\otimes\dots\otimes L_k^{(m)}(\mu_m\epsilon_1)\otimes L_k^{(m+1)}(\omega_{\mu_{m+1}})\otimes \dots\otimes L_k^{(m+n)}(\omega_{\mu_{m+n}})\right)[(\la_1,\dots,\la_{k})].\label{weight duality}
\end{align}
These weight spaces are spanned by monomials in $V$ which have total degree $\la_a$ with respect to variables $x_{1,a},\dots,x_{m+n,a}$ and total degree $\mu_i$ with respect to variables $x_{i,1},\dots, x_{i,k}$.

The standard duality between $\gl_{m|n}$ and $\gl_{k}$ is the following well-known statement.
 
\begin{lem}
The actions of $\gl_{m|n}$ and $\gl_k$ on $V$ commute. We have the isomorphism of $\gl_{m|n}\oplus\gl_k$
modules
\[
V=\bigoplus_{\mu\in P_{m,n;k}}L_{m|n}(\mu^\natural)\otimes L_k(\mu),
\]
where $P_{m,n;k}$ is the set of all $(m|n)$-hook partition with length at most $k$.\qed
\end{lem}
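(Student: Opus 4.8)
The plan is to prove the statement in two stages: first the commutativity of the two actions, then the decomposition, for which I would pass to Schur--Weyl duality. For commutativity, observe that each $\pi_k(e^{[k]}_{a,b})=\sum_{l}x_{l,a}\dd_{l,b}$ is an \emph{even} operator on $V$, so the graded commutator $[\pi_{m|n}(e^{[m|n]}_{i,j}),\pi_k(e^{[k]}_{a,b})]$ equals the ordinary commutator $\sum_{c,l}[x_{i,c}\dd_{j,c},\,x_{l,a}\dd_{l,b}]$. I would expand each bracket by moving the $\dd$'s past the $x$'s via $\dd_{j,c}x_{l,a}=(-1)^{\bar j\bar l}x_{l,a}\dd_{j,c}+\de_{j,l}\de_{c,a}$. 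Using super-commutativity of the $x$'s among themselves and of the $\dd$'s among themselves, the two quadratic contributions coincide and cancel, leaving the linear remainder $\sum_c\de_{c,a}x_{i,c}\dd_{j,b}-\sum_c\de_{b,c}x_{i,a}\dd_{j,c}=x_{i,a}\dd_{j,b}-x_{i,a}\dd_{j,b}=0$. Hence the two images supercommute.

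For the decomposition I would first grade $V$ by total degree, $V=\bigoplus_{N\ge 0}V_N$. Since both $\pi_{m|n}(e^{[m|n]}_{i,j})$ and $\pi_k(e^{[k]}_{a,b})$ preserve total degree, each $V_N$ is a finite-dimensional $\gl_{m|n}\oplus\gl_k$-submodule, so it suffices to decompose each $V_N$. Next I would identify $V$ with the supersymmetric algebra $S(\C^{m|n}\otimes\C^k)$, on which $\gl_{m|n}$ acts through the first and $\gl_k$ through the second tensor factor; by construction these induced derivation actions are exactly $\pi_{m|n}$ and $\pi_k$. Then $V_N$ is the $N$-th supersymmetric power, which is the trivial-isotypic component
\[ V_N\;\cong\;\mathrm{Hom}_{\sS_N}\!\big(\mathbf 1,\,(\C^{m|n})^{\otimes N}\otimes(\C^k)^{\otimes N}\big), \]
where $\sS_N$ acts diagonally, by the super (signed) permutation action on the first factor and by the ordinary permutation action on the (purely even) second factor.

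Now I would invoke the two Schur--Weyl dualities. Super Schur--Weyl duality gives $(\C^{m|n})^{\otimes N}\cong\bigoplus_{\mu}L_{m|n}(\mu^\natural)\otimes S^\mu$ as a $\gl_{m|n}\times\sS_N$-module, the sum over $(m|n)$-hook partitions $\mu\vdash N$ with $S^\mu$ the Specht module, while ordinary Schur--Weyl gives $(\C^k)^{\otimes N}\cong\bigoplus_{\nu}L_k(\nu)\otimes S^\nu$ over $\nu\vdash N$ with $\ell(\nu)\le k$. Applying $\mathrm{Hom}_{\sS_N}(\mathbf 1,-)$, which is exact on the semisimple category of $\sS_N$-modules, and using the self-duality of Specht modules so that $\mathrm{Hom}_{\sS_N}(\mathbf 1,S^\mu\otimes S^\nu)=\de_{\mu,\nu}$, I obtain
\[ V_N\;\cong\;\bigoplus_{\mu}\,L_{m|n}(\mu^\natural)\otimes L_k(\mu), \]
the sum over $\mu\vdash N$ that are $(m|n)$-hook of length at most $k$. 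Summing over $N$ yields the claimed isomorphism, with $P_{m,n;k}$ being precisely the set of such partitions, and the multiplicity-one statement is automatic from the $\de_{\mu,\nu}$.

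The main obstacle is super Schur--Weyl duality itself with the correct sign conventions: one must check that under the signed $\sS_N$-action the \emph{untwisted} Specht module $S^\mu$ pairs the hook module $L_{m|n}(\mu^\natural)$ with the genuine partition $\mu$ rather than its conjugate, so that the self-duality pairing produces the same $\mu$ on both sides. The bookkeeping of Koszul signs and the passage from the hook shape to the highest weight $\mu^\natural$ is where the care is needed. As an independent check I would confirm the bigraded dimensions: the monomial bases of the weight spaces described after \eqref{weight duality} give a combinatorial matching of characters which must agree with $\sum_\mu \mathrm{ch}\,L_{m|n}(\mu^\natural)\cdot \mathrm{ch}\,L_k(\mu)$, i.e.\ with the supersymmetric Cauchy identity. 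Alternatively, one can exhibit the joint highest weight vectors explicitly as products of ``super'' principal minors of the matrix $(x_{i,a})$ and count them, which reconfirms both multiplicity one and the hook and length constraints.
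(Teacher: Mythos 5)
Your proposal is correct, but note that the paper does not actually prove this lemma: it states it with a \qed{} as the ``standard duality'' between $\gl_{m|n}$ and $\gl_k$, implicitly deferring to the classical literature (this is the super Howe duality of Berele--Regev/Sergeev type). So you have supplied an argument where the paper supplies a citation-by-assertion. Your route is the standard one and it is sound: the commutator computation for the first claim is exactly right (the quadratic terms cancel by super-commutativity of the $x$'s and of the $\dd$'s, and the two contraction terms $x_{i,a}\dd_{j,b}$ cancel against each other), and for the decomposition you correctly reduce $V_N=S^N(\C^{m|n}\otimes\C^k)$ to $\mathrm{Hom}_{\sS_N}\big(\mathbf 1,(\C^{m|n})^{\otimes N}\otimes(\C^k)^{\otimes N}\big)$ with the signed diagonal action, then apply super Schur--Weyl duality on the first factor, ordinary Schur--Weyl on the second, and self-duality of $\sS_N$-irreducibles to extract the multiplicity-one pairing $\nu=\mu$; intersecting the hook condition from the first factor with $\ell(\mu)\le k$ from the second gives precisely $P_{m,n;k}$. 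The one genuine external input is the Berele--Regev theorem itself, including the sign convention that pairs the hook module $L_{m|n}(\mu^\natural)$ with the untwisted Specht module $S^\mu$ (rather than $S^{\mu'}$); you correctly isolate this as the delicate point rather than burying it, and since the paper treats the whole lemma as known, invoking that theorem is a legitimate level of granularity. What your approach buys over the paper's bare citation is an explicit mechanism (and the multiplicity-one statement falls out of Schur's lemma); what the alternative route via the supersymmetric Cauchy identity and explicit joint highest weight vectors (which you mention as a cross-check) buys is independence from representation theory of $\sS_N$ and a concrete description of the highest weight vectors as products of super minors of $(x_{i,a})$, which is closer in spirit to the weight-space bookkeeping the paper records in \eqref{weight duality}.
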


\subsection{The duality of Bethe algebras $\BB_{m|n}(\bs \La)$ and $\BB_k(\bs z)$}
Let $\bs z=(z_1,\dots,z_k)$ and $\bs\La=(\La_1,\dots,\La_{m+n})$ be two sequences of complex numbers. We extend actions of $\gl_{m|n}$ and $\gl_k$ on $V$ to the actions of the current algebras $\gl_{m|n}[t]$ and $\gl_{k}[t]$ as follows.

Let $\hat\pi_{m|n}: U\gl_{m|n}[t] \to \mathcal D$ and $\hat\pi_k: U\gl_k[t] \to \mathcal D$ be homomorphisms of superalgebras given by 
\begin{align}
\hat\pi_{m|n}:e^{[m|n]}_{i,j}(v)&\mapsto \sum_{a=1}^{k}\frac{x_{i,a}\dd_{j,a}}{v-z_a},\qquad i,j=1,\dots,m+n,\label{equation:lie superalgebra action}\\
\hat\pi_{k}:e^{[k]}_{a,b}(u)&\mapsto \sum_{i=1}^{m+n}\frac{x_{i,a}\dd_{i,b}}{u-\La_i},\qquad a,b=1,\dots,k.\label{equation:lie algebra action}
\end{align} 
Then the $\gl_{m|n}$-module $V^{(a)}_{m|n}$ becomes evaluation $\gl_{m|n}[t]$-module  $(V^{(a)}_{m|n})_{z_a}$ and the $\gl_{k}$-module $V^{(i)}_{k}$ becomes evaluation $\gl_{k}[t]$-module  $(V^{(i)}_{k})_{\La_i}$, see \eqref{equation:current algebra action}.

\medskip

The actions of $\gl_{m|n}[t]$ and $\gl_{k}[t]$ on $V$ do not commute anymore. However, we prove the theorem saying that the actions of Bethe algebras $\BB_{m|n}(\bs\La)\subset U\gl_{m|n}[t]$ and $\BB_{k}(\bs z)\subset U\gl_{k}[t]$ on $V$ coincide.

Recall that the Bethe algebra $\BB_{m|n}(\bs\La)$ is generated by the coefficients of the Berezinian of the matrix
\[
B(\bs\La)=\Big(\de_{i,j}(\dv-\La_i)-(-1)^{\bar i}e^{[m|n]}_{i,j}(v)\Big)_{i,j=1,\dots,m+n}.
\] 
Similarly, the Bethe algebra $\BB_{k}(\bs z)$ is generated by the coefficients of the column determinant of the matrix
\[
G(\bs z)=\Big(\de_{a,b}(\du-z_a)-e^{[k]}_{a,b}(u)\Big)_{a,b=1,\dots,k}.
\] 

\begin{thm}\label{thm:duality between bethe algebras}
The Bethe algebras $\hat\pi_{k}\BB_{k}(\bs z)$ and $\hat\pi_{m|n}\BB_{m|n}(\bs\La)$ coincide. 

Moreover, we have the following identification of generators. If $b_{r,s}(\bs z,\bs\La),\ g_{r,s}(\bs\La,\bs z)\in \mathcal D$ do not depend on $v$, $\dd_v$, $u$, $\dd_u$, and
	\begin{align*}
	(v-z_1)\dots(v-z_k)\ \hat\pi_{m|n}\,\ber B(\bs\La)=\sum_{r=0}^{k}\sum_{s=-\infty}^{m-n}b_{r,s}(\bs z,\bs\La)v^r\dd_v^{s}\ ,\\
	\frac{(u-\La_1)\dots(u-\La_m)}{(u-\La_{m+1})\dots(u-\La_{m+n})}\ \hat\pi_{k}\,\cdet G(\bs z)=\sum_{r=-\infty}^{m-n}\sum_{s=0}^kg_{r,s}(\bs\La,\bs z)u^{r}\dd_u^s\ ,
	\end{align*}
	 then
		\be\label{equation:duality of bethe algebras}
	b_{r,s}(\bs z,\bs\La)=g_{s,r}(\bs\La, \bs z).\ee
\end{thm}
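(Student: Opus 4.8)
The plan is to reduce the equality of the two Bethe algebras acting on $V$ to a single Capelli-type computation, using the fact that both $\cdet G(\bs z)$ and $\ber B(\bs\La)$ can be written as expansions of the \emph{same} supercommutative expression once the non-commutative quantum corrections are shown to cancel. The central object is the matrix
\[
M=\Big(\de_{i,a}\,\text{(scalar part)}+x_{i,a}\dd_{j,b}\text{-type entries}\Big)
\]
built from the $(m+n)\times k$ generators $x_{i,a},\dd_{i,a}$ of $\mathcal D$; the key observation is that $\hat\pi_{m|n}B(\bs\La)$ and $\hat\pi_k G(\bs z)$ are two different block/transpose views of one such operator. Concretely, I would first establish the two Capelli identities promised in the introduction (Propositions \ref{prop:capelli} and \ref{prop:ber for gl_m|n}): these give the normal-ordered expansions of $\hat\pi_k\,\cdet G(\bs z)$ and $\hat\pi_{m|n}\,\ber B(\bs\La)$ in $\mathcal D$, and the main content is that after moving all $\dd_{i,a}$ to the right of all $x_{j,b}$, every commutator-generated correction cancels, so each side equals the value it would have in the supercommutative case.

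Next I would set up the comparison of the two supercommutative expressions. The idea is to form an auxiliary $(m+n+k)\times(m+n+k)$ matrix encoding both $x$-rows and $\dd$-columns, and to apply the Berezinian machinery of Section \ref{sec:Ber}: since $\hat\pi_{m|n}B(\bs\La)$ is an affine-like Manin matrix of standard parity (by the Lemma preceding \eqref{equation:Molev expansion}, pulled through $\hat\pi_{m|n}$), Proposition \ref{prop:affine-like properties} together with Proposition \ref{prop:manin matrix row-column change} lets me interchange rows and columns of the enlarged matrix without changing the Berezinian, thereby converting the Berezinian of the $\gl_{m|n}$ matrix into a column determinant of the $\gl_k$ matrix up to explicit scalar rational factors. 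The scalar prefactors $(v-z_1)\cdots(v-z_k)$ and $\frac{(u-\La_1)\cdots(u-\La_m)}{(u-\La_{m+1})\cdots(u-\La_{m+n})}$ appearing in the theorem are exactly the determinants/Berezinians of the diagonal ``evaluation'' parts that get stripped off in this interchange, which is why multiplying by them makes the two sides genuinely equal as differential operators in $\mathcal D$.

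The final bookkeeping step is to match coefficients. Having shown
\[
(v-z_1)\cdots(v-z_k)\,\hat\pi_{m|n}\,\ber B(\bs\La)
=\frac{(u-\La_1)\cdots(u-\La_m)}{(u-\La_{m+1})\cdots(u-\La_{m+n})}\,\hat\pi_k\,\cdet G(\bs z)
\]
as a single element of $\mathcal D$ under the identification that sends the pair $(v,\dd_v)$ to the pair $(\dd_u,u)$ — that is, the variable $v$ plays the role of $\dd_u$ and $\dd_v$ the role of $u$, which is the source of the index swap $b_{r,s}=g_{s,r}$ — I would read off \eqref{equation:duality of bethe algebras} by comparing the coefficient of $v^r\dd_v^s$ on the left with the coefficient of $u^s\dd_u^r$ on the right. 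The coefficient comparison also immediately yields that the two algebras $\hat\pi_k\BB_k(\bs z)$ and $\hat\pi_{m|n}\BB_{m|n}(\bs\La)$ coincide, since each is generated by the coefficients $b_{r,s}$, respectively $g_{r,s}$, in $\mathcal D$.

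The main obstacle I expect is the cancellation of quantum corrections in the two Capelli identities, and in particular the Berezinian case: because of the fermionic variables $x_{j,a}$ ($j>m$) there are genuinely more terms in the normal-ordering than in the classical Capelli identity, and the Berezinian involves an \emph{inverse} block (the $\wt a$ entries in \eqref{equation:ber}), so one cannot just track a determinant expansion directly. The device that makes this tractable is the one flagged in the introduction — representing $\ber B$ via the enlarged matrix and the affine-like Manin/quasideterminant formalism — so that the delicate cancellation is handled once, uniformly, by the row-column interchange of Proposition \ref{prop:affine-like properties} rather than by a term-by-term fermionic sign chase.
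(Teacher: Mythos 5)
Your proposal is correct and takes essentially the same route as the paper: the two Capelli-type normal-ordered expansions (Propositions \ref{prop:capelli} and \ref{prop:ber for gl_m|n}), the enlarged $(m+n+k)\times(m+n+k)$ affine-like Manin matrix whose rows and columns are interchanged via Proposition \ref{prop:affine-like properties} so that the Berezinian becomes a column determinant times the explicit rational prefactors, and coefficient matching under the identification $v\leftrightarrow\du$, $\dv\leftrightarrow u$, which is exactly the source of the index swap $b_{r,s}=g_{s,r}$. One small correction of emphasis: the row-column interchange does not by itself replace the term-by-term cancellation of quantum corrections on the Berezinian side --- it only reduces $\ber B$ to a $k\times k$ column determinant (in the variables $v,\dv$ with the roles of $x$ and $\dd$ reversed), which the paper then still expands by the same Capelli-style sign chase as in Proposition \ref{prop:capelli}.
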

\begin{proof}
The proof of this theorem is given in Sections \ref{subsec:capelli formula} and \ref{subsec:another capelli}.
\end{proof}
By Theorem \ref{thm:bethe algebra}, Bethe algebras preserve weight spaces.
In particular, Theorem \ref{thm:duality between bethe algebras} gives an identification of action of Bethe algebras $\BB_{k}(\bs z)$ and $\BB_{m|n}(\bs\La)$ on the weight spaces \eqref{weight duality}.

\subsection{An identity of Capelli type}\label{subsec:capelli formula} 
In this section we give an explicit expansion of $\hat\pi_{k}\cdet G(\bs z)$.

Let $\mathcal{D}_u =\mathcal D((u^{-1}))$
be the superalgebra of Laurent series in $u^{-1}$ with values in $\mathcal{D}$. The algebra  $\mathcal{D}_u$ has a derivation $\du$ and $\mathcal{D}_u((\du^{-1}))$ is the superalgebra of pseudodifferential operators.

Let $G(\bs\La,\bs z)$ be a $k\times k$ matrix with entries in $\mathcal{D}_u[\du]\subset\mathcal{D}_u((\du^{-1}))$ 
 given by \[G(\bs\La,\bs z)=\hat \pi_k G(\bs z)=\left(\de_{a,b}(\du-z_a)-\sum_{i=1}^{m+n}\frac{x_{i,a}\dd_{i,b}}{u-\La_i}\right)_{a,b=1,\dots,k}.\] 

The matrix $G(\bs\La,\bs z)$ is a Manin matrix of parity $(1,\dots,1)$.
We want to expand $\cdet G(\bs\La,\bs z)$. In order to do that, we introduce some notation.

The superalgebra $\mathcal{D}_u((\du^{-1}))$ is topologically generated by $x_{i,a},\partial_{i,a},u^{\pm 1},\partial_u^{\pm 1}$. Define an ordering on the generators such that $x_{i,a}<\dd_{j,b}<u^{\pm 1}<\du^{\pm1}$, $i,j=1,\dots,m+n$, $a,b=1,\dots,k$, and $x_{i,a}<x_{j,b}$, $\dd_{i,a}<\dd_{j,b}$, if either $a<b$ or $a=b$ and $i<j$. 

Let $m$ be a monomial in the generators. Denote by $:\hspace{-3pt}m\hspace{-3pt}:$ the new monomial where all participating generators are multiplied in the increasing order and the sign is changed by the usual supercommutativity rule.
For example, 
\[
:\du^{-1}u^{-1}\dd_{1,1}x_{1,1}\dd_{m+1,2}x_{m+1,1}:=-x_{1,1}x_{m+1,1}\dd_{1,1}\dd_{m+1,2}u^{-1}\du^{-1}.\]
We call $:\hspace{-3pt}m\hspace{-3pt}:$ the {\it normal ordered monomial}.

Let \be\label{equation:definition of F}
F^{i}_{a,b}=\begin{cases}
-x_{i,a}\dd_{i,b}(u-\La_{i})^{-1}, &  i=1,\dots,m+n,\ a,b=1,\dots,k,\\
\du-z_a, &  a=1,\dots,k,\ b=a,\ i=0,\\
0,&{\rm otherwise}.
\end{cases}  
\ee Note that in all cases $F^{i}_{a,b}$ is even and normal ordered. In the expansion of $\cdet G(\bs\La,\bs z)$, every term will be given as a product of $F^{i}_{a,b}$.

Denote by $|S|$ the cardinality of a set $S$.

Let $\mathfrak a=\{1\leq a_1<\dots<a_{l}\leq k\}$ be a subset of $\{1,\dots,k\}$, where $l=|\mathfrak{a}|$.
Let $J(\mathfrak a)$ be the set of function $j:\ \{1,\dots,k\}\to\{0,1,\dots,m+n\}$  such that $j(a)=0$ if and only if $a\not\in\mathfrak{a}$ and such that for any $i\in\{1,\dots,m\}$, $|j^{-1}(i)|\leq 1$.

We have 
\[
|J(\mathfrak a)|=\sum_{s=0}^{|\mathfrak{a}|}\binom{l}{s}\binom{m}{s}\ s!\ n^{l-s}.
\]
For $j_1,j_2\in J(\mathfrak a)$, we write $j_1 \sim j_2$ if $|j_1^{-1}(i)|=|j_2^{-1}(i)|$ for all $i$.
Clearly, $\sim$ is an equivalence relation in $J(\mathfrak a)$. The cardinality of the equivalence class of $j\in J(\mathfrak a)$ is $l!/(\prod_{i=m+1}^{m+n}|j^{-1}(i)|!)$.

For $j\in J(\mathfrak a)$, $j^{-1}(\{1,\dots,m+n\})=\mathfrak a$. Therefore the symmetric group $\sS_{|\mathfrak{a}|}$ acts on the preimage $j^{-1}(\{1,\dots,m+n\})$. Given $j_1\sim j_2$, there exists a unique permutation $\si_{j_1,j_2}\in\sS_{|\mathfrak{a}|}$ such that $\si_{j_1,j_2}:j_2^{-1}(i)\rightarrow j_1^{-1}(i)$ is an increasing function for all $i=1,\dots,m+n$. Note that $j_1\circ \si_{j_1,j_2}=j_2$ on $\mathfrak a$. 

We also define
\begin{align*}
&\sgn(j_1,j_2)=(-1)^{\#}, \\ &\#=|\{ (s,s')\ {\rm such\ that}\ 1\leq s<s'\leq l,\ \si_{j_1,j_2}(s)<\si_{j_1,j_2}(s'), \ j_2(a_s)>m, \ j_2(a_{s'})>m\}|.
\end{align*}

Given $j_1, j_2\in J(\mathfrak{a})$, $j_1\sim j_2$, define the sign 
\[
c(j_1,j_2)=\sgn(j_1,j_2)\ \sgn(\si_{j_1,j_2})\,(-1)^{l}.
\]

For $j\in J(\mathfrak{a})$, set 
$$
\bs x_j=x_{j(a_1),a_1}x_{j(a_2),a_2}\dots x_{j(a_{l}),a_{l}}, \qquad 
\bs \dd_j=\dd_{j(a_1),a_1}\dd_{j(a_2),a_2}\dots \dd_{j(a_{l}),a_{l}}.
$$
Note that monomials $\bs x_j$ and $\bs \dd_j$ are normal ordered.

Now we are ready to state the main result of this section. 
\begin{prop}\label{prop:capelli}
The normal ordered expansion of the column determinant of $G(\bs\La,\bs z)$ is given by
\be\label{equation:capelli}
\cdet G(\bs\La,\bs z)=\hspace{-5pt}\sum_{\scriptstyle \mathfrak a\subset\{1,\dots,k\}}\sum_{\scriptstyle j_1,j_2\in J(\mathfrak a)\atop\scriptstyle j_1\sim j_2} c(j_1,j_2)\prod_{i=m+1}^{m+n}|j_2^{-1}(i)|!\ \bs x_{j_1}\bs \dd_{j_2}\prod_{i\in j_2(\mathfrak a)}(u-\La_i)^{-1}\prod_{a\not\in { \mathfrak a}}(\du-z_a).
\ee
\end{prop}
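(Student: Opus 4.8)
The plan is to expand the column determinant directly from its definition,
\[
\cdet G(\bs\La,\bs z)=\sum_{\si\in\sS_k}\sgn\si\ \big(G(\bs\La,\bs z)\big)_{\si(1),1}\big(G(\bs\La,\bs z)\big)_{\si(2),2}\cdots \big(G(\bs\La,\bs z)\big)_{\si(k),k},
\]
and to substitute into each factor the decomposition $\big(G(\bs\La,\bs z)\big)_{a,b}=\de_{a,b}(\du-z_a)+\sum_{i=1}^{m+n}F^i_{a,b}$ given by \eqref{equation:definition of F}. Multiplying out, each elementary term is specified by $\si$ together with, for each column $b$, a choice of either the diagonal summand $F^0_{b,b}=\du-z_b$ (available only when $\si(b)=b$) or an off-diagonal summand $F^{i}_{\si(b),b}$, $i\in\{1,\dots,m+n\}$. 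I record such a term by the subset $\mathfrak a\subset\{1,\dots,k\}$ of columns carrying an off-diagonal summand, by the function $i\colon\mathfrak a\to\{1,\dots,m+n\}$ giving the chosen row index, and by $\si$; since the diagonal summand forces $\si(b)=b$ for $b\notin\mathfrak a$, the permutation $\si$ fixes the complement of $\mathfrak a$ and restricts to a permutation of $\mathfrak a$.

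Next I would bring every elementary term to normal order. Two types of non-commutativity contribute. First, the diagonal factors $\du-z_b$, $b\notin\mathfrak a$, must be carried to the right past the scalars $(u-\La_i)^{-1}$ attached to the off-diagonal summands; as $\du$ commutes with all $x_{j,a},\dd_{j,a}$ but $[\du,(u-\La_i)^{-1}]=-(u-\La_i)^{-2}$, this produces \emph{derivative corrections}. Second, sorting the $x$'s and $\dd$'s inside the product of off-diagonal factors into increasing column order uses $\dd_{i,b}x_{j,b'}=(-1)^{\bar i\bar j}x_{j,b'}\dd_{i,b}+\de_{i,j}\de_{b,b'}$, whose last term produces a \emph{contraction correction} whenever two column labels coincide. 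The contribution in which no correction is ever taken is, after sorting, precisely the normal ordered monomial $\bs x_{j_1}\bs \dd_{j_2}\prod_{i\in j_2(\mathfrak a)}(u-\La_i)^{-1}\prod_{a\notin\mathfrak a}(\du-z_a)$ with $j_2=i$ and $j_1=i\circ\si^{-1}$.

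The heart of the argument, and the step I expect to be the main obstacle, is to show that the two families of corrections cancel against each other. A direct $2\times2$ computation already reveals the mechanism: a derivative correction, in which a diagonal $\du$ of a column $b_0$ is absorbed against a scalar $(u-\La_i)^{-1}$ of an off-diagonal column $b_1>b_0$, yields a term of exactly the same shape as a contraction correction, in which two off-diagonal factors sharing the row index $i$ collapse via $[\dd_{i,b},x_{i,b}]=1$ (in both cases one loses a $\du$ and one $x_{i,\cdot}\dd_{i,\cdot}$ pair and gains a factor $(u-\La_i)^{-1}$). For general $k$ several such corrections may occur simultaneously, so the cancellation must be organized globally; I would do this by an explicit sign-reversing involution pairing each partially corrected configuration with exactly one other of opposite sign, or, alternatively, by induction on $k$ peeling off the first column. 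The Manin property of $G(\bs\La,\bs z)$, a Manin matrix of parity $(1,\dots,1)$, is what guarantees that the signs in this pairing match. After this cancellation, $\cdet G(\bs\La,\bs z)$ equals the sum of its leading normal ordered contributions.

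Finally I would collect the leading contributions producing a fixed monomial $\bs x_{j_1}\bs\dd_{j_2}$. For fixed $(j_1,j_2)$ the permutations $\si$ with $j_1\circ\si=j_2$ form a nonempty coset precisely when $j_1\sim j_2$, of cardinality $\prod_i|j_2^{-1}(i)|!$. The subtle point is that summing $\sgn\si$ alone over this coset would vanish; one must also keep the $\si$-dependent reordering signs incurred while sorting the odd variables $x_{j,a},\dd_{j,a}$, $j>m$, into increasing column order. For a repeated fermionic index these reordering signs combine with $\sgn\si$ to a common value on the coset, giving the multiplicity $\prod_{i=m+1}^{m+n}|j_2^{-1}(i)|!$ and the sign $\sgn(\si_{j_1,j_2})\,\sgn(j_1,j_2)\,(-1)^l$, whereas for a repeated bosonic index the reordering sign stays $+1$, so the residual sum of $\sgn\si$ over the coset is zero. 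This last cancellation is exactly why the summation is restricted to $j_1,j_2\in J(\mathfrak a)$. Matching the resulting coefficient with $c(j_1,j_2)\prod_{i=m+1}^{m+n}|j_2^{-1}(i)|!$ yields \eqref{equation:capelli}.
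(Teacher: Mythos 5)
Your proposal is correct and follows essentially the same route as the paper's proof: a direct expansion of $\cdet G(\bs\La,\bs z)$, the supercommutative computation in which repeated bosonic indices cancel via $\sgn\si$ while repeated fermionic indices combine into the factor $\prod_{i=m+1}^{m+n}|j_2^{-1}(i)|!$ (which is exactly why the sum restricts to $J(\mathfrak a)$), and the pairwise cancellation of the two kinds of quantum corrections you identify. The paper organizes that cancellation precisely as in your second suggested option, by a column-by-column induction normal ordering the factors from the right, pairing the contraction correction of the term labeled by $(\si,i_1,\dots,i_k)$ with the derivative correction of the term labeled by the permutation $\si\,(a_0-1,b)$ and upper indices $(i_1,\dots,i_{a_0-2},0,i_{a_0},\dots,i_k)$, the opposite signs coming simply from $\sgn\si$ under the transposition rather than from any separate appeal to the Manin property.
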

\begin{proof}  We first assume all generators are supercommutative and show equation \eqref{equation:capelli} holds. Then we show that the additional terms created by non-trivial supercommutation relations cancel in pairs and do not contribute to the expansion. 

Recall even elements $F^{i}_{a,b}$ given in \eqref{equation:definition of F}. We have the expansion 
\[
\cdet G(\bs\La,\bs z)=\sum_{\si\in\sS_k}\sum_{i_1,\dots,i_k=0}^{m+n}\sgn(\si)F^{i_1}_{\si(1),1}\dots F^{i_k}_{\si(k),k}.
\]
Now we want to normal order it.

\medskip 

Assume the supercommutators are all zero, $[u,\dd_u]=[x_{i,a},\dd_{i,a}]=0$.

For a nonzero term $\sgn(\si)F^{i_1}_{\si(1),1}\dots F^{i_k}_{\si(k),k}$, let $\mathfrak{a}=\{a,i_a\neq 0\}\subset \{1,\dots, k\}$. We write the set 
$\mathfrak a=\{a_1<\dots <a_l\}$. Then we can rewrite our sum as follows
\[
\cdet G(\bs\La,\bs z)=\sum_{l=0}^k\sum_{1\leq a_1<\dots<a_l\leq k}\sum_{\si\in\sS_{l}}\sum_{i_{1},\dots,i_{l}=1}^{m+n} \sgn(\si)F^{i_1}_{a_{\si(1)},a_1}\dots F^{i_{l}}_{a_{\si(l)},a_{l}}\prod_{a,\ a\neq a_1,\dots, a_l}(\dd-z_a).
\]  

We normal order the term corresponding to $a_1<\dots<a_{l},\si\in\sS_{l}, i_1,\dots,i_{l}$. Let $i_{\bar{1}}$ be the number of upper indices greater than $m$, $i_{\bar{1}}=|\{i_s>m,s=1,\dots,l\}|$. We have
\[
F^{i_1}_{a_{\si(1)},a_1}\dots F^{i_{l}}_{a_{\si(l)},a_l}=
(-1)^{l+i_{\bar{1}}(i_{\bar{1}}-1)/2}\ \frac{x_{i_1,a_{\si(1)}}\dots x_{i_l,a_{\si(l)}}\dd_{i_1,a_1}\dots \dd_{i_l,a_l}}{(u-\La_{i_1})\dots(u-\La_{i_l})}.
\] 

Note that monomial $\dd_{i_1,a_1}\dots \dd_{i_l,a_l}$ is normal ordered. We now observe some simplifications before ordering $x_{i_1,a_{\si(1)}}\dots x_{i_l,a_{\si(l)}}$.

Consider a term corresponding to $a_1<\dots<a_l$, $\si$, $i_1,\dots,i_l$.

Fix an $i\in \{1,\dots, m+n\}$. Let $\mathfrak b=\{s,\ i_s=i\}\subset \{1,\dots,l\}$. If $|\mathfrak b|=r>1$, then we have $r!$ terms which correspond to the same $a_1<\dots<a_l$, $i_1,\dots, i_l$, and permutations of the form $\tau\sigma$, where $\tau\in \sS_l$ permutes elements of $a_s, \  s\in\mathfrak b,$ and leaves others preserved.

If $i\leq m$, then after normal ordering all these $r!$ terms will produce the same monomial with different signs and cancel out. On the other hand, if $i> m$, then after normal ordering, all these $r!$ terms will produce the same monomial with the same sign and therefore can be combined.

Therefore, the summands in the expansion can be reparametrized by $\mathfrak a\subset\{1,\dots,k\}$ and
$j_1,j_2\in J(\mathfrak{a})$, $j_1\sim j_2$. The correspondence is given by 
\be\label{j def}
\mathfrak{a}=\{a_1<\dots<a_l\}, \qquad j_1(a_{\si(s)})=j_2(a_s)=i_s, \qquad s=1,\dots, l.
\ee
Note that $\sigma$ is not recovered from $\mathfrak a,j_1,j_2$. In fact, we have  
$|\{s,\ i_s=m+1\}|!\dots|\{s,\ i_s=m+n\}|!$ choices for $\sigma$, which all correspond to equal summands. 
We choose one permutation, namely $\si_{j_1,j_2}$, and multiply the corresponding term by $|j_1^{-1}(m+1)|!\dots|j_1^{-1}(m+n)|!$.

So we have
\begin{align*}
\cdet G(\bs\La,\bs z)=\hspace{-5pt}\sum_{\scriptstyle \mathfrak a\subset\{1,\dots,k\}}\sum_{\scriptstyle j_1,j_2\in J(\mathfrak a)\atop\scriptstyle j_1\sim j_2}\ \sgn(\si_{j_1,j_2})\ (-1)^{|j_1|_{\bar 1}(|j_1|_{\bar 1}-1)/2+|\mathfrak a|}\prod_{i=m+1}^{m+n}|j_2^{-1}(i)|! \\
\ \times\frac{x_{j_2(a_1),a_{\si_{j_1,j_2}(1)}}\dots x_{j_2(a_{|\mathfrak a|}),a_{\si_{j_1,j_2}(|\mathfrak a|)}}\dd_{j_2(a_1),a_1}\dots \dd_{j_2(a_{|\mathfrak a|}),a_{|\mathfrak a|}}}{(u-\La_{i_1})\dots(u-\La_{i_{|\mathfrak a|}})}\ \prod_{a\not\in { \mathfrak a}}(\du-z_a)\ ,
\end{align*}
where we denoted by $|j_1|_{\bar 1}$ the cardinality of $j_1^{-1}(\{m+1,\dots,m+n\})$. 
We rewrite the first indices of  $x_{i,a}$ variables through $j_1$, using \eqref{j def}, and then we normal order them, getting the additional sign and arriving at \eqref{equation:capelli}.

\medskip

Now we proceed to the non-commutative setting. We call the additional terms "quantum corrections" and show that they cancel in pairs. 

We normal order monomials from right to left. The induction is based on the number of $F^{i}_{a,b}$ on the right which have been normal ordered. Namely we prove
 \be \label{ordered} \cdet G(\bs\La,\bs z)=\sum_{\si\in\sS_k}\sum_{i_1,\dots,i_k=0}^{m+n}\sgn(\si)\, F^{i_1}_{\si(1),1}\dots :F^{i_{a}}_{\si(a),a}\dots F^{i_k}_{\si(k),k}:\ee
by induction on $a$.

The basis $a=k$ of induction is a tautology. 
We show the step of induction from $a=a_0$ to $a=a_{0}-1$.
 
We use the following simple formula:
$$
F^{i_1}_{a_1,b_1}F^{i_2}_{a_2,b_2}=:F^{i_1}_{a_1,b_1}F^{i_2}_{a_2,b_2}:-
\begin{cases}
F^{i_2}_{a_2,b_2}(u-\La_{i_2})^{-1}, & i_1=0,\ a_1=b_1,\ i_2\neq 0, \\
\delta_{i_1,i_2}\delta_{b_1,a_2} F^{i_1}_{a_1,b_2} (u-\La_{i_2})^{-1}, & i_1\neq 0, \\
0, & {\rm otherwise}.
\end{cases}
$$
Consider a nonzero term $\sgn(\si) F^{i_{a_0-1}}_{\si(a_0-1),a_0-1}:F^{i_{a_0}}_{\si(a_0),a_0}\dots F^{i_k}_{\si(k),k}:$. 
Then $i_a=0$ implies $\si(a)=a$.

We have two cases: $i_{a_0-1}\neq 0$ and $i_{a_0-1}= 0$.

Let $i_{a_0-1}\neq 0$. Then $F^{i_{a_0-1}}_{\si(a_0-1),a_0-1}$ creates at most one quantum correction.
Namely, if there exists $b\in\{a_0,\dots,k\}$ such that $i_{a_0-1}=i_b$, and $a_0-1=\si(b)$, then such $b$ is unique and 
\begin{align}\label{equation:nomoral product 1}\nonumber
\sgn(\si)\, F^{i_{a_0-1}}_{\si(a_0-1),a_0-1}:&F^{i_{a_0}}_{\si(a_0),a_0}\dots F^{i_b}_{\si(b),b} \dots F^{i_k}_{\si(k),k}:\\ \nonumber
=&\sgn(\si) :F^{i_{a_0-1}}_{\si(a_0-1),a_0-1}F^{i_{a_0}}_{\si(a_0),a_0}\dots F^{i_b}_{\si(b),b} \dots F^{i_k}_{\si(k),k}:\\ 
&-\sgn(\si) \frac{1}{u-\La_{i_b}}:F^{i_{a_0}}_{\si(a_0),a_0}\dots F^{i_b}_{\si(a_0-1),b} \dots F^{i_k}_{\si(k),k}:.
\end{align}
If such $b$ does not exist then there is no quantum correction (the second term on the right hand side is absent).

Let $i_{a_0-1}= 0$. Then we possibly have many quantum corrections:
\begin{align}\label{equation:nomoral product 2}\nonumber
\sgn(\si)F^{0}_{a_0-1,a_0-1}:F^{i_{a_0}}_{\si(a_0),a_0} \dots F^{i_k}_{\si(k),k}:=\sgn(\si) :F^0_{a_0-1,a_0-1}F^{i_{a_0}}_{\si(a_0),a_0} \dots F^{i_k}_{\si(k),k}:\\ 
-\sgn(\si) \sum_{\scriptstyle  a=a_0\atop i_a\neq0}^{k}\frac{1}{u-\La_{i_a}}:F^{i_{a_0}}_{\si(a_0),a_0} \dots F^{i_k}_{\si(k),k}:.
\end{align}

The quantum correction in \eqref{equation:nomoral product 1} corresponding to the term labeled by $\si, i_1,\dots,i_k$ in \eqref{ordered} cancels with the quantum correction corresponding to $a=b$ summand in \eqref{equation:nomoral product 2} applied to the term in \eqref{ordered} labeled by  $\si(a_0-1,b)$, $\{i_1,\dots,i_{a_0-2},0,i_{a_0},\dots, i_k\}$. 
This proves the induction step.

The statement of induction with $a=1$ proves the proposition.
\end{proof}

\subsection{Another identity of Capelli type}\label{subsec:another capelli} Let $\mathcal{D}_v=\mathcal{D}((v^{-1}))$, be the superalgebra of Laurent series in $v^{-1}$ with coefficients in $\mathcal{D}$. The superalgebra $\mathcal{D}_v$ has a derivation $\dv$ and we consider the superalgebra of pseudodifferential operators $\mathcal{D}_v((\dv^{-1}))$.

Let $B(\bs z,\bs\La)$ be a $(m+n)\times(m+n)$ matrix with entries in $\mathcal D_v[\dv]\subset \mathcal D_v((\dv^{-1}))$ given by 
\[
B(\bs z,\bs\La)=\hat\pi_{m|n}B(\bs \La)=\left(\de_{ij}(\dv-\La_i)-\sum_{a=1}^{k}\frac{(-1)^{\bar i}x_{i,a}\dd_{j,a}}{v-z_a}\right)_{i,j=1,\dots,m+n}.
\] The matrix $B(\bs z,\bs\La)$ is a Manin matrix of standard parity.

Let $\hat{B}(\bs z,\bs\La)$ be a $(m+n+k)\times(m+n+k)$ matrix given by
\be\label{equation: one side of capelli matrix}
\hat{B}(\bs z,\bs\La)=\bpm
v-Z	&D^t\\
SX	&\dv-\La
\epm
\ee where the submatrices are $Z=\diag(z_1,\dots,z_k)$, $\La=\diag(\La_1,\dots,\La_{m+n})$, $D=\big(\dd_{i,a}\big)_{i=1,\dots,m+n}^{a=1,\dots,k}$, $X=\big(x_{i,a}\big)_{i=1,\dots,m+n}^{a=1,\dots,k}$, $S=\diag(\underbrace{1,\dots,1}_m,\underbrace{-1,\dots,-1}_n)$, and $D^t$ is the transpose of $D$. In particular, $SX=\big((-1)^{\bar{i}}x_{i,a}\big)_{i=1,\dots,m+n}^{a=1,\dots,k}$.

Let $\mathcal{D}_v((\dv^{-1}))((w))$ be the superalgebra of Laurent series in $w$ with coefficients in $\mathcal{D}_v((\dv^{-1}))$. Define the homomorphism of superalgebras
\begin{align}\label{phi hat}
&\hat\Phi: \mathcal{D}_v((\dv^{-1}))\to  \mathcal{D}_v((\dv^{-1}))((w)),\notag\\
&v\mapsto v+w^{-1},\ \dv\mapsto \dv+w^{-1},\ {\rm and}\  g\mapsto g,\ g\in\mathcal D.\end{align}
Note that in our convention we first expand in positive powers of $w$ then in powers of $\dv^{-1}$ and then in powers of $v^{-1}$, cf. \eqref{equation:Phi map}.
As a result, if a series is in the image of $\hat\Phi$, then it belongs to $\mathcal D[v,\dv]((w))$, in other words, a coefficient of $w^k$ is always a polynomial in $\dv$ and $v$ for any $k\in\Z$. 

The map $\hat \Phi$ is a composition of map $\Phi$, see \eqref{equation:Phi map} and of the shift homomorphism $v\to v+w^{-1}$. Therefore, $\hat \Phi$ is a well-defined injective homomorphism.

Then, it is straightforward to check the following statement.

\begin{lem}
The matrix $\hat{B}(\bs z,\bs\La)$ is an affine-like Manin matrix of parity $\hat{\bs s}_0=(\underbrace{1,\dots,1}_{k+m},\underbrace{-1,\dots,-1}_n)$ with the map $\hat\Phi$. \qed
\end{lem}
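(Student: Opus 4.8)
The plan is to verify separately the three defining conditions: that $\hat B(\bs z,\bs\La)$ has parity $\hat{\bs s}_0$, that it satisfies the Manin relations of that parity, and that it is affine-like with the map $\hat\Phi$. The parity is immediate from a block-by-block inspection: the diagonal blocks $v-Z$ and $\dv-\La$ have even entries and occupy the diagonal, where $\bar p^{\hat{\bs s}_0}+\bar q^{\hat{\bs s}_0}=\bar 0$, while the off-diagonal entries $\dd_{j,a}$ and $(-1)^{\bar i}x_{i,a}$ carry parities $\bar j$ and $\bar i$, which are exactly $\bar p^{\hat{\bs s}_0}+\bar q^{\hat{\bs s}_0}$ for the corresponding index pairs. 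For the affine-like map, $\hat\Phi$ is already known to be a well-defined injective homomorphism, so I would only check on entries that $\hat\Phi(\hat B_{p,q})=\hat B_{p,q}+\de_{p,q}w^{-1}$; this holds because $\hat\Phi$ adds $w^{-1}$ to each of $v$ and $\dv$ (the diagonal entries) and fixes every $x_{i,a}$ and $\dd_{i,a}$ (the off-diagonal entries).

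For the Manin relations I would reduce the identity $[a_{p,q},a_{r,s}]=(-1)^{\bar p\bar q+\bar p\bar r+\bar q\bar r}[a_{r,q},a_{p,s}]$ to its $2\times 2$ instances, obtained by fixing two rows $p,r$ and two columns $q,s$. Since the only nonzero supercommutators among entries of $\hat B$ are $[\dv,v]=1$ between the two diagonal blocks and $[\dd_{j,a},(-1)^{\bar i}x_{i,b}]=(-1)^{\bar i}\de_{i,j}\de_{a,b}$ between the top-right and bottom-left blocks, all but two families of $2\times 2$ minors are manifestly trivial, with both sides vanishing. The first nontrivial family pairs a top-left diagonal entry $v-z_a$ with a bottom-right diagonal entry $\dv-\La_i$; the complementary entries are precisely $(-1)^{\bar i}x_{i,a}$ and $\dd_{i,a}$, the parity sign turns out to be trivial, and both sides agree (each equal to $-1$). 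The second family pairs a $\dd_{j,a}$ with a $(-1)^{\bar i}x_{i,b}$; the complementary entries are the diagonal entries $\de_{i,j}(\dv-\La_i)$ and $\de_{a,b}(v-z_a)$, and one checks that the sign $(-1)^{\bar i\bar j}$ produced by the Manin relation collapses to $(-1)^{\bar i}$ on the support $i=j$, exactly matching the factor $(-1)^{\bar i}$ coming from $S$. I expect this sign bookkeeping — the precise cancellation among the matrix $S=\diag(1,\dots,-1)$, the prefactors $(-1)^{\bar i}$, and the parity-dependent Manin sign — to be the main, though purely mechanical, obstacle.

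Finally, for the invertibility condition I would fix an arbitrary subset $\mathfrak a\subset\{1,\dots,m+n+k\}$, split it as $\mathfrak a=\mathfrak a_1\sqcup\mathfrak a_2$ according to the two blocks, and write $\hat B_{\mathfrak a}$ in block form with diagonal blocks $W=\diag(v-z_a:a\in\mathfrak a_1)$ and $P=\diag(\dv-\La_i:k+i\in\mathfrak a_2)$. Both are invertible in $\mathcal A=\mathcal D_v((\dv^{-1}))$, with $W^{-1}$ a Laurent series in $v^{-1}$ and $P^{-1}$ a power series in $\dv^{-1}$. By the Gauss decomposition \eqref{inv}, $\hat B_{\mathfrak a}$ is invertible as soon as the Schur complement $P-Y'W^{-1}X'$ is, where $X',Y'$ are the relevant off-diagonal subblocks. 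Writing $P-Y'W^{-1}X'=P(1-P^{-1}Y'W^{-1}X')$, the correction $P^{-1}Y'W^{-1}X'$ has $\dv$-order at most $-1$, so its $r$-th power has $\dv$-order at most $-r$ and the Neumann series $\sum_{r\ge 0}(P^{-1}Y'W^{-1}X')^r$ converges in the $\dv^{-1}$-adic topology of $\mathcal A$; hence the Schur complement, and therefore $\hat B_{\mathfrak a}$, is invertible with entries in $\mathcal A$. Reading off the diagonal entries of $\hat B_{\mathfrak a}^{-1}$ from the same decomposition, each equals $(v-z_a)^{-1}$ or $(\dv-\La_i)^{-1}$ times a factor of the form $1+(\text{$\dv$-order}\le -1)$, hence a product of units in $\mathcal A$; this yields the required invertibility of the diagonal of the inverse and completes the verification that $\hat B(\bs z,\bs\La)$ is an affine-like Manin matrix of parity $\hat{\bs s}_0$.
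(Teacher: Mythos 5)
The paper offers no proof of this lemma at all — it is asserted as "straightforward to check" and closed with a \qed — so your verification is exactly the omitted check, and it is correct: the parity inspection, the reduction of the Manin relations to the two nontrivial $2\times 2$ families (where the sign $(-1)^{\bar i\bar j}$ collapsing to $(-1)^{\bar i}$ on $i=j$ against the factor from $S$, and the matching values $[v-z_a,\dv-\La_i]=[(-1)^{\bar i}x_{i,a},\dd_{i,a}]=-1$, are the genuine content), the entrywise check that $\hat\Phi$ sends $\hat B_{p,q}\mapsto \hat B_{p,q}+\de_{p,q}w^{-1}$, and the Schur-complement/Neumann-series argument for invertibility of every submatrix $\hat B_{\mathfrak a}$ and of the diagonal entries of its inverse in $\mathcal D_v((\dv^{-1}))$ all hold up.
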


We would like to expand and normal order the Berezinian of $B(z,\bs \La)$. 
However, it is sufficient to expand and normal order Berezinian of $\hat{B}(\bs z,\bs\La)$.
Indeed, by Proposition \ref{prop:affine-like properties}, we have 
\be\label{equation:ber for gl_m|n}
{\rm Ber}^{\hat{\bs s}_0}\, \hat{B}(\bs z,\bs\La)=(v-z_1)\dots(v-z_{k})\ \ber B(\bs z, \bs\La),
\ee
cf. Corollary 2.2 of \cite{MTV5}.

The expansion of the Berezinian of $\hat{B}(\bs z,\bs\La)$ is given by the following proposition.
\begin{prop}\label{prop:ber for gl_m|n}
We have 
\begin{align}
{\rm Ber}^{\hat{\bs s}_0}\,\hat{B}(\bs z,\bs\La)=\hspace{-5pt}\sum_{\scriptstyle \mathfrak{a}\subset\{1,\dots,k\}}\sum_{\scriptstyle j_1,j_2\in J(\mathfrak{a})\atop\scriptstyle j_1\sim j_2} c(j_1,j_2)\prod_{i=m+1}^{m+n}|j_2^{-1}(i)|!&\ \bs x_{j_1}\bs \dd_{j_2}\prod_{a\not\in \mathfrak{a}}(v-z_a)\prod_{i\in j_1(\mathfrak{a})}(\dv-\La_i)^{-1}\notag\\ 
&\times\frac{(\dv-\La_1)\dots(\dv-\La_m)}{(\dv-\La_{m+1})\dots(\dv-\La_{m+n})} \label{equation:ber for gl_m|n expansion}\ .
\end{align}
\end{prop}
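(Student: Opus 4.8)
The plan is to avoid expanding $\ber B(\bs z,\bs\La)$ directly, since the odd block of $B(\bs z,\bs\La)$ enters its Berezinian through an inverse and is awkward to normal order. Instead I would exploit the enlarged matrix $\hat B(\bs z,\bs\La)$ of \eqref{equation: one side of capelli matrix}, whose virtue is that the generators $x_{i,a},\dd_{i,a}$ sit only in the off-diagonal blocks $SX$ and $D^t$, while the two diagonal blocks $v-Z$ and $\dv-\La$ are scalar. Reordering the rows and columns so that the $(m+n)$-dimensional $\dv-\La$ block occupies a corner and then applying the block factorization of Proposition~\ref{prop:blockwise change} (valid for affine-like Manin matrices by Proposition~\ref{prop:affine-like properties}) will turn the Berezinian into a single $k\times k$ column determinant together with the Berezinian of the diagonal block $\dv-\La$.

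Concretely, I would permute $\hat B(\bs z,\bs\La)$ to the block order $[\La_1,\dots,\La_{m+n}],[z_1,\dots,z_k]$, which does not change the value of the Berezinian by Proposition~\ref{prop:affine-like properties}, and then Schur-complement out the $\dv-\La$ block. Because $\dv-\La$ is diagonal, its standard-parity Berezinian is immediately $\frac{(\dv-\La_1)\dots(\dv-\La_m)}{(\dv-\La_{m+1})\dots(\dv-\La_{m+n})}$, which is exactly the last factor of \eqref{equation:ber for gl_m|n expansion}, and the Schur complement is the even $k\times k$ Manin matrix $T$ with entries
\[
T_{a,b}=(v-z_a)\de_{a,b}-\sum_{i=1}^{m+n}(-1)^{\bar i}\,\dd_{i,a}(\dv-\La_i)^{-1}x_{i,b},\qquad a,b=1,\dots,k.
\]
This reduces the proposition to expanding and normal ordering $\cdet T$; the factors $(\dv-\La_i)^{-1}$ appearing in \eqref{equation:ber for gl_m|n expansion} will come from the couplings of $T$, while the overall ratio comes from the diagonal block.

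The matrix $T$ is the exact counterpart of $G(\bs\La,\bs z)=\hat\pi_k G(\bs z)$ from \eqref{equation:capelli} under the formal interchange $x_{i,a}\leftrightarrow\dd_{i,a}$, $\du-z_a\mapsto v-z_a$ and $(u-\La_i)^{-1}\mapsto(\dv-\La_i)^{-1}$, up to the signs $(-1)^{\bar i}$. I would therefore expand $\cdet T$ by copying the two-stage argument of Proposition~\ref{prop:capelli}. First, treating all generators as supercommuting, the nonzero terms organize themselves over subsets $\mathfrak a\subset\{1,\dots,k\}$ and pairs $j_1\sim j_2$ in $J(\mathfrak a)$: a column $a\notin\mathfrak a$ contributes the diagonal factor $(v-z_a)$, while a column $a\in\mathfrak a$ uses a coupling and contributes the monomials $x_{i,\cdot},\dd_{i,\cdot}$ together with a factor $(\dv-\La_i)^{-1}$; as before the bosonic indices $i\le m$ may be used at most once and the fermionic ones may repeat, which produces the factorials $|j_2^{-1}(i)|!$ and the sign $c(j_1,j_2)$. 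Second, I would restore the supercommutators and show that the quantum corrections cancel in pairs, by the same induction on the number of ordered factors used in \eqref{ordered}--\eqref{equation:nomoral product 2}.

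The main obstacle is this second step. In $T$ the factor $\dd_{i,a}$ stands to the \emph{left} of $x_{i,b}$, the reverse of the order in $G(\bs\La,\bs z)$, so normal ordering now creates, besides the corrections already encountered in Proposition~\ref{prop:capelli}, genuine self-contractions $[\dd_{i,a},x_{i,a}]_{\pm}$ on the diagonal entries and additional signs $(-1)^{\bar i}$ from moving $\dd$ past $x$. I expect most of the work to lie in checking that all of these corrections still cancel pairwise and that the surviving signs assemble into precisely $c(j_1,j_2)$; this is the point where the cancellation of quantum corrections advertised in the introduction must be verified anew for $T$ rather than quoted. A secondary but unavoidable issue is bookkeeping the order of the pseudodifferential factors: the diagonal $\dv$-ratio and the factors $(\dv-\La_i)^{-1}$ commute with $\bs x_{j_1}\bs \dd_{j_2}$ but not with the polynomials $(v-z_a)$, so I would take the factorization in the direction that places the ratio on the right, as displayed in \eqref{equation:ber for gl_m|n expansion}, and keep careful track of the resulting normal form in $v$ and $\dv$.
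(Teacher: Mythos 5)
Your first half is exactly the paper's route: permute $\hat B(\bs z,\bs\La)$ so that the block $\dv-\La$ sits in the upper-left corner, then apply Propositions \ref{prop:affine-like properties} and \ref{prop:blockwise change}. The gap comes immediately after. Proposition \ref{prop:blockwise change} yields the Berezinian of the upper-left block on the \emph{left},
\[
{\rm Ber}^{\hat{\bs s}_0}\,\hat{B}(\bs z,\bs\La)
=\frac{(\dv-\La_1)\dots(\dv-\La_m)}{(\dv-\La_{m+1})\dots(\dv-\La_{m+n})}\;\cdet T ,
\]
with your matrix $T$, and the paper provides no factorization "in the direction that places the ratio on the right" of $\cdet T$. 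Indeed that statement is false, not merely unproven: already for $m=k=1$, $n=0$ one has $T=(v-z_1)-\dd_{1,1}(\dv-\La_1)^{-1}x_{1,1}$, so
\[
\cdet T\times(\dv-\La_1)=(v-z_1)(\dv-\La_1)-x_{1,1}\dd_{1,1}-1,
\qquad
{\rm Ber}^{\hat{\bs s}_0}\,\hat{B}=(v-z_1)(\dv-\La_1)-x_{1,1}\dd_{1,1},
\]
and the defect $-1$ is precisely the self-contraction $[\dd_{1,1},x_{1,1}]$ you flag. So those self-contractions do not "cancel pairwise"; they are compensated only by transporting the ratio from the left to the right, and your plan conflates the two sides.

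The missing idea is the paper's conjugation trick, which performs this transport at the level of matrix entries before any determinant expansion. For $i\le m$ one has
\[
(\dv-\La_i)\bigl(v-z_a-\dd_{i,a}x_{i,a}(\dv-\La_i)^{-1}\bigr)
=\bigl(v-z_a-x_{i,a}\dd_{i,a}(\dv-\La_i)^{-1}\bigr)(\dv-\La_i),
\]
with the analogous identity using $(\dv-\La_i)^{-1}$ for $i>m$; off-diagonal entries need no identity because $\dd_{i,a}$ and $x_{i,b}$ with $a\neq b$ supercommute, which also absorbs the sign $(-1)^{\bar i}$. Since conjugation by the ratio is an algebra automorphism, it commutes with $\cdet$, so conjugating $T$ entrywise moves the ratio to the right and \emph{simultaneously} replaces every coupling $(-1)^{\bar i}\dd_{i,a}(\dv-\La_i)^{-1}x_{i,b}$ by the normal-ordered $x_{i,b}\dd_{i,a}(\dv-\La_i)^{-1}$, with no leftover contraction terms and no extra signs. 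After this step the matrix has exactly the structure of $G(\bs\La,\bs z)$, and the two-stage argument of Proposition \ref{prop:capelli} applies verbatim --- which is what the paper then quotes. The cancellation you proposed to "verify anew" never has to be confronted; without the conjugation identities your proof cannot be completed as written, because its starting identity (ratio on the right of $\cdet T$) is wrong.
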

\begin{proof} Let $\si\in\sS_{m+n+k}$ be defined by $\si^{-1}(a)=m+n+a$, $a=1,\dots,k$, and $\si^{-1}(k+i)=i$, $i=1,\dots,m+n$. 
Then
$$\si(\hat{B}(\bs z,\bs\La))=\bpm \dv-\La&SX\\D^t&v-Z\epm.$$ 
	
	The matrix $\si(\hat{B}(\bs z,\bs\La))$ is an affine-like Manin matrix of parity $\bs s=(\underbrace{1,\dots,1}_m,\underbrace{-1,\dots,-1}_n,\underbrace{1,\dots,1}_k)$ with the map $\hat\Phi$. By Proposition \ref{prop:affine-like properties}, we have 
\[
{\rm Ber}^{\hat{\bs s}_0}\hat{B}(\bs z,\bs\La)=\bers\si(\hat{B}(\bs z,\bs\La)).
\]

Using Proposition \ref{prop:affine-like properties}  once again (we use $r=m+n$), we further see 
\[
\bers\si(\hat{B}(\bs z,\bs\La))=\frac{(\dv-\La_1)\dots(\dv-\La_m)}{(\dv-\La_{m+1})\dots(\dv-\La_{m+n})}\ \cdet B'(\bs z,\bs\La)\ ,
\] where  $B'(\bs z,\bs\La)$ is an even matrix given by
\[B'(\bs z,\bs\La)=\Big(\de_{a,b}(v-z_a)-\sum_{i=1}^{m+n}\frac{(-1)^{\bar i}\dd_{i,a}x_{i,b}}{\dv-\La_i}\Big)_{a,b=1,\dots,k}\ .\] 

Next we move the factor $\frac{(\dv-\La_1)\dots(\dv-\La_m)}{(\dv-\La_{m+1})\dots(\dv-\La_{m+n})}$ to the right of the column determinant. Note that for $i\in\{1,\dots,m\},$ $a\in\{1,\dots,k\}$, we have \[(\dv-\La_i)(v-z_a-\frac{\dd_{i,a}x_{i,a}}{\dv-\La_i})=(v-z_a-\frac{x_{i,a}\dd_{i,a}}{\dv-\La_i})(\dv-\La_i)\ .\]
Similarly, for  $i\in\{m+1,\dots,m+n\},$ $a\in\{1,\dots,k\}$, 
\[
\frac{1}{(\dv-\La_i)}(v-z_a+\frac{\dd_{i,a}x_{i,a}}{\dv-\La_i})=(v-z_a-\frac{x_{i,a}\dd_{i,a}}{\dv-\La_i})\frac{1}{(\dv-\La_i)}\ .
\] Therefore, we have
\begin{align*}
{\rm Ber}^{\hat{\bs s}_0}\ \hat{B}(\bs z,\bs\La)=\cdet \Big(\de_{a,b}(v-z_a)-\sum_{i=1}^{m+n}&\frac{x_{i,b}\dd_{i,a}}{\dv-\La_i}\Big)_{a,b=1,\dots,k}\hspace{-5pt}\times\frac{(\dv-\La_1)\dots(\dv-\La_m)}{(\dv-\La_{m+1})\dots(\dv-\La_{m+n})}\ .
\end{align*}
Finally, the expansion of the above column determinant is done by a computation similar to the one in Proposition \ref{prop:capelli}. 
\end{proof}

Theorem \ref{thm:duality between bethe algebras} follows from Propositions \ref{prop:capelli} and \ref{prop:ber for gl_m|n}.

\medskip

We remark that the $k\times k$ column determinant $\cdet G(\bs\La,\bs z)$ in Proposition \ref{prop:capelli} is also essentially a Berezinian of an $(m+n+k)\times(m+n+k)$ matrix.
Namely, let $\hat{G}(\bs\La,\bs z)$ be a $(m+n+k)\times(m+n+k)$ matrix given by 
\[
\hat{G}(\bs\La,\bs z)=\bpm
u-\La	&D\\
X^t	&\du-Z
\epm.
\]  Then $\hat{G}(\bs\La,\bs z)$ is an affine-like Manin matrix of parity $\bs s=(\underbrace{1,\dots,1}_m,\underbrace{-1,\dots,-1}_n,\underbrace{1,\dots,1}_k)$ with the same homomorphism of superalgebras $\hat{\Phi}$, see \eqref{phi hat}. By Proposition \ref{prop:affine-like properties}, the Berezinian of parity $\bs s$ of $\hat{G}(\bs\La,\bs z)$ is given by 
\be\label{equation:ber for gl_k}
\bers\hat{G}(\bs\La,\bs z)=\frac{(u-\La_1)\dots(u-\La_m)}{(u-\La_{m+1})\dots(u-\La_{m+n})}\ \cdet G(\bs\La,\bs z).
\ee

\end{document}